\theoremstyle{plain}
\newtheorem{thm}{Theorem}[section]
\newtheorem{lemma}[thm]{Lemma}
\newtheorem{cor}[thm]{Corollary}
\newtheorem*{thm*}{Theorem}
\theoremstyle{definition}
\newtheorem{dfn}[thm]{Definition}
\newtheorem{ex}[thm]{Example}
\newtheorem{rem}[thm]{Remark}
\numberwithin{equation}{section}
\newcommand{\C}{\mathbb{C}}
\newcommand{\R}{\mathbb{R}}
\newcommand{\GL}{{\rm GL}}
\newcommand{\gl}{\mathfrak{gl}}
\newcommand{\so}{\mathfrak{so}}
\newcommand{\g}{\mathfrak{g}}
\newcommand{\M}{{\rm M}}
\newcommand{\LRC}[2]{c^{#1}_{#2}}
\newcommand{\bmu}{\bm{\mu}}
\newcommand{\la}{\lambda}
\newcommand{\ga}{\gamma}
\newcommand{\bnu}{\bm{\nu}}
\renewcommand{\hom}{{\rm Hom}}
\renewcommand{\O}{{\rm O}}
\newcommand{\Sp}{{\rm Sp}}
\renewcommand{\sp}{\mathfrak{sp}}
\renewcommand{\k}{\mathfrak{k}}
\newcommand{\p}{\mathfrak{p}}
\renewcommand{\q}{\mathfrak{q}}
\newcommand{\SSYT}{{\rm SSYT}}
\begin{document}

\title{$K$-type multiplicities in degenerate principal series \\ via Howe duality}

\author{Mark Colarusso}
\address{
Department of Mathematics and Statistics \\
University of South Alabama \\ 
Mobile, AL 36608} 
\email{mcolarusso@southalabama.edu}

\author{William Q.~Erickson}
\address{
Department of Mathematics\\
Baylor University \\ 
One Bear Place \#97328\\
Waco, TX 76798} 
\email{will\_erickson@baylor.edu}

\author{Andrew Frohmader}
\address{
Department of Mathematical Sciences\\
University of Wisconsin--Milwaukee \\ 
3200 N.~Cramer St.\\
Milwaukee, WI 53211} 
\email{frohmad4@uwm.edu}

\author{Jeb F.~Willenbring}
\address{
Department of Mathematical Sciences\\
University of Wisconsin--Milwaukee \\ 
3200 N.~Cramer St.\\
Milwaukee, WI 53211} 
\email{jw@uwm.edu}

\subjclass[2020]{Primary 20G05; Secondary 05E10; 17B10}

\keywords{Degenerate principal series, $K$-type multiplicities, branching rules, semistandard tableaux, Littlewood--Richardson coefficients}

\begin{abstract}

Let $K$ be one of the complex classical groups $\O_k$, $\GL_k$, or $\Sp_{2k}$.
Let $M \subseteq K$ be the block diagonal embedding $\O_{k_1} \times \cdots \times \O_{k_r}$ or $\GL_{k_1} \times \cdots \times \GL_{k_r}$ or $\Sp_{2k_1} \times \cdots \times \Sp_{2k_r}$, respectively.
By using Howe duality and seesaw reciprocity as a unified conceptual framework, we prove a formula for the branching multiplicities from $K$ to $M$ which is expressed as a sum of generalized Littlewood--Richardson coefficients, valid within a certain stable range.
By viewing $K$ as the complexification of the maximal compact subgroup $K_\R$ of the real group $G_\R = \GL(k,\R)$, $\GL(k, \C)$, or $\GL(k,\mathbb{H})$, respectively, one can interpret our branching multiplicities as $K_\R$-type multiplicities in degenerate principal series representations of $G_\R$.
Upon specializing to the minimal $M$, where $k_1 = \cdots = k_r = 1$, we establish a fully general tableau-theoretic interpretation of the branching multiplicities, corresponding to the $K_\R$-type multiplicities in the principal series.
\end{abstract}

\maketitle

\section{Introduction}

\subsection*{Motivation from real groups}

The principal series plays a fundamental role in the representation theory of real groups.
Casselman~\cite{Casselman} established that every irreducible admissible representation of a real reductive group $G_\R$ can be realized as a submodule of some principal series representation of $G_\R$.
(See also~\cite{Collingwood}*{Cor.~1.18}.)
Principal series representations arise via induction from a finite-dimensional irreducible representation of a minimal parabolic subgroup $P_\R$.
More generally, if $P_\R$ is not necessarily minimal, then the parabolically induced representations are said to be \emph{degenerate principal series} representations.
These representations are infinite-dimensional.
A key technique in infinite-dimensional representation theory is to decompose admissible representations of $G_\R$ under restriction to the action of a maximal compact subgroup $K_\R$.
This allows for the possibility of a combinatorial description of the structure of the representation by determining a formula for the multiplicities of the irreducible representations of $K_\R$ (called \emph{$K_\R$-types}) in the decomposition.
Although the main result in this paper is a finite-dimensional branching rule from $K_\R$ down to a certain subgroup $M_\R$, this rule can be viewed as a $K_\R$-type multiplicity formula for degenerate principal series of $G_\R$.

In more detail, let $G_\R$ be a real reductive group with maximal compact subgroup $K_\R$.
Let $P_\R$ be a (not necessarily minimal) parabolic subgroup of $G_\R$.
Let $\bmu$ be an irreducible finite-dimensional representation of $P_\R$ on which its nilradical acts trivially.
By inducing $\bmu$ from $P_\R$ to $G_\R$, in the sense of~\cite{WallachRRI}*{\S1.5}, we obtain a \emph{degenerate principal series} representation of $G_\R$, whose underlying Harish-Chandra module we denote by $\mathcal{P}(\bmu)$:
\begin{equation*}
    \label{Ind P G}
    \mathcal{P}(\bmu) \coloneqq \text{the underlying Harish-Chandra module of }{\rm Ind}^{G_\R}_{P_\R} \bmu.
\end{equation*}

From our perspective, the goal is to understand the $K_\R$-structure of $\mathcal{P}(\bmu)$.
To this end, let $M_\R$ denote the intersection of $K_\R$ with the reductive Levi factor of $P_\R$.
In the context of this paper, we may suppose that $\bmu$ remains irreducible upon restriction from $P_\R$ to $M_\R$.
As representations of $K_\R$,
\begin{equation}
\label{Res Ind}
    \mathcal{P}(\bmu) \cong {\rm Ind}^{K_\R}_{M_\R} \bmu,
\end{equation}
where the right-hand side denotes algebraic induction from $M_\R$ to $K_\R$.
Now we wish to describe the multiplicity of a given $K_\R$-type, say $\lambda$, inside $\mathcal{P}(\bmu)$.
By applying Frobenius reciprocity to~\eqref{Res Ind}, we compute this multiplicity as
\begin{align*}
\dim \hom_{K_\R} ( \la, \mathcal{P}(\bmu) ) &= \dim \hom_{K_\R} ( \la, {\rm Ind}^{K_\R}_{M_\R} \bmu) \\
& = \dim \hom_{M_\R} ( \bmu , {\rm Res}^{K_\R}_{M_\R} \la ) \eqqcolon b^\la_{\bmu},
\end{align*}
where $b^\la_{\bmu}$ denotes the multiplicity of the $M_\R$-type $\bmu$ inside the $K_\R$-type $\la$.
In this way, the problem of determining $K_\R$-type multiplicities in degenerate principal series is equivalent to the restriction problem (for \emph{finite}-dimensional representations) from $K_\R$ to $M_\R$.
In the case where $P_\R$ is a minimal parabolic, the numbers $b^\la_{\bmu}$ are the $K_\R$-type multiplicities in the \emph{principal series} representation induced from $\bmu$.
The branching multiplicities $b^\la_{\bmu}$ are the focus of this paper.

\subsection*{Overview of results}

Recall that the finite-dimensional real associative division algebras are the real numbers $\R$, the complex numbers $\C$, and the quaternions $\mathbb{H}$.
In this paper, we let $G_\R$ be one of the real groups $\GL(k, \R)$, $\GL(k, \mathbb{C})$, or $\GL(k, \mathbb{H})$.
We choose $K_\R$ to be the subgroup of $G_\R$ preserving the norm induced by conjugation on $\R$, $\C$, or $\mathbb{H}$, respectively.
We take the subgroup $M_\R \subseteq K_\R$ to be the block-diagonally embedded direct sum of lower-rank groups of the same Lie type as $K_\R$; thus each possible $M_\R$ is given by a choice of positive integers $k_1, \ldots, k_r$ summing to $k$.
We let $K$ and $M$ denote the complexifications of $K_\R$ and $M_\R$.
Concrete details are given below:
\begingroup
\renewcommand*{\arraystretch}{1.5}
\begin{equation}
\label{G info}
    \begin{array}{c|c|c|c|c}
        G_\R & K_\R & M_\R & K & M \\ \hline
       \GL(k, \R) & \O(k) & \O(k_1) \times \cdots \times \O(k_r) & \O_k & \O_{k_1} \times \cdots \times \O_{k_r} \\
        \GL(k, \mathbb{C}) & {\rm U}(k) & {\rm U}(k_1) \times \cdots \times {\rm U}(k_r) & \GL_k & \GL_{k_1} \times \cdots \times \GL_{k_r} \\
        \GL(k, \mathbb{H}) & \Sp(k) & \Sp(k_1) \times \cdots \times \Sp(k_r) & \Sp_{2k} & \Sp_{2k_1} \times \cdots \times \Sp_{2k_r}
    \end{array}
\end{equation}
\endgroup
Note that the groups $K$ are the complex classical groups (orthogonal, general linear, and symplectic). 
For each of these groups $K$, the finite-dimensional irreducible complex representations $U^\la$ (with rational matrix coefficients) are indexed by a certain set of partitions $\la$, or pairs $\la = (\la^+, \la^-)$ of partitions for $K = \GL_k$.
For $M$, then, the finite-dimensional irreducible rational representations $W^{\bmu}$ are labeled by $r$-tuples $\bmu = (\mu_1, \ldots, \mu_r)$ of partitions, or (when $K = \GL_k$) by pairs $\bmu = (\bmu^+, \bmu^-)$ of such $r$-tuples.
As described above, our main interest in this paper is the restriction problem from $K_\R$ to $M_\R$, or equivalently from $K$ to $M$.
That is, our main results are combinatorial formulas for the branching multiplicities
\begin{equation}
\label{b la mu original}
    b^\la_{\bmu} \coloneqq \dim \hom_{M} ( W^{\bmu}, {\rm Res}^{K}_{M} U^\la ).
\end{equation}

The \emph{Littlewood--Richardson coefficients} are ubiquitous in algebraic combinatorics~\cites{Fulton,HoweLee}; given partitions $\la, \mu, \nu$, the Littlewood--Richardson coefficient $c^\la_{\mu \nu}$ gives the coefficient of the Schur function $s_\la$ in the product $s_\mu s_\nu$.
One can generalize these coefficients for products of arbitrarily many Schur functions, so that, for example, $c^\la_{\bmu \bm{\nu}}$ gives the coefficient of $s_\la$ in the product $s_{\mu_1} \cdots s_{\mu_r} s_{\nu_1} \cdots s_{\nu_t}$.
Recall that a partition is said to have \emph{even rows} (resp., columns) if every row (resp., column) in its Young diagram has even length. 
Our first main result is the following formula (see Theorem~\ref{thm:main result}) for the branching multiplicity $b^\la_{\bmu}$ expressed as a sum of generalized Littlewood--Richardson coefficients.

\begin{thm}[Stable branching rule from $K$ to $M$]
\label{theorem 1 in intro}

Let $b^\la_{\bmu}$ denote the branching multiplicity~\eqref{b la mu original} from $K$ to $M$.
Let $k_1, \ldots, k_r$ be positive integers summing to $k$.
If $\la$ and $\bmu$ lie within a certain stable range (to be specified below) depending on $\min_i \{k_i\}$, then we have the following:

\begin{enumerate}[label=\textup{(\alph*)}]
    \item Branching from $K = \O_k$ to $M = \O_{k_1} \times \cdots \times \O_{k_r}$, we have
    \[
    b^\la_{\bmu} = \sum_{\bnu} \: \LRC{\la}{\bmu\bnu},
    \]
where $\bnu$ ranges over all $(r-1)$-tuples of partitions with even rows.

\item Branching from $K = \GL_k$ to $M = \GL_{k_1} \times \cdots \times \GL_{k_r}$, we have 
\[
b^\la_{\bmu} =  \sum_{\bnu} \LRC{\la^+}{\bmu^+\bnu} \:\LRC{\la^-}{\bmu^-\bnu},
\]
where $\bnu$ ranges over all $(r-1)$-tuples of partitions.

\item Branching from $K = \Sp_{2k}$ to $M = \Sp_{2k_1} \times \cdots \times \Sp_{2k_r}$, we have 
\[
b^\la_{\bmu} = \sum_{\bnu} \LRC{\la}{\bmu\bnu},
\]
where $\bnu$ ranges over all $(r-1)$-tuples of partitions with even columns.

\end{enumerate}

\end{thm}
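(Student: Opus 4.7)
My plan is to apply Howe duality together with a character-theoretic Cauchy identity argument, uniformly across the three cases. First, for each case I would fix the appropriate reductive dual pair $(K, G')$ acting on a polynomial ring $\mathcal{P}(W)$: namely $(\O_k, \Sp_{2n})$ on $\mathcal{P}(\C^k \otimes \C^n)$ in case~(a), $(\GL_k, \GL_n \times \GL_m)$ on $\mathcal{P}(\C^k \otimes \C^n \oplus (\C^k)^* \otimes \C^m)$ in case~(b), and $(\Sp_{2k}, \O_{2n})$ on $\mathcal{P}(\C^{2k} \otimes \C^n)$ in case~(c). In a suitable stable range, each pair produces a multiplicity-free Howe decomposition, with the $K$-invariant subring $\mathcal{P}(W)^K$ factoring out on the $G'$-side to give a natural Cauchy-like identity.

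The key step is to compute the character of $\mathcal{P}(W)$ in two ways and equate them. The $(K, G')$-Howe decomposition yields the form $\chi(\mathcal{P}(W)^K) \cdot \sum_\la \chi(U^\la)(x)\,\phi_\la(y)$ for a Schur-type kernel $\phi_\la$. The block decomposition $W = \bigoplus_i W_i$ induced by $M = K_1 \times \cdots \times K_r$ gives instead $\chi(\mathcal{P}(W)) = \prod_i \chi(\mathcal{P}(W_i))$, which factors an $r$-fold product of $\chi(\mathcal{P}(W_i)^{K_i}) = \chi(\mathcal{P}(W)^K)$. Equating the two and cancelling one factor of the invariant character leaves a residual $\chi(\mathcal{P}(W)^K)^{r-1}$; extracting the coefficient of $\chi(W^\bmu)(x)$ against the appropriate Schur basis element in $y$ (and $z$, for case~(b)) then isolates $b^\la_\bmu$, with the residual expanding via classical Cauchy-type identities: $\chi(\mathcal{P}(\text{Sym}^2 \C^n)) = \sum_{\nu \text{ even rows}} s_\nu$ in case~(a), $\chi(\mathcal{P}(\wedge^2 \C^n)) = \sum_{\nu \text{ even cols}} s_\nu$ in case~(c), and $\chi(\mathcal{P}(M_{n,m})) = \sum_\tau s_\tau(y) s_\tau(z)$ in case~(b), where the shared $\tau$-index becomes the common $\bnu$ linking the $+$ and $-$ Littlewood--Richardson factors.

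The main obstacle will be to specify a stable range on $\min_i k_i$ sharp enough that the Howe decompositions of both $\mathcal{P}(W)$ and each $\mathcal{P}(W_i)$ are multiplicity-free as stated, and that the character identity above lifts to a genuine equality of branching multiplicities rather than merely a virtual-character identity. Case~(b) requires particular care, since the character of $U^{(\la^+, \la^-)}_{\GL_k}$ agrees with the ``leading'' expression $s_{\la^+}(x)\,s_{\la^-}(x^{-1})$ only in a strong enough stable range, outside of which Koike-type signed corrections would intervene and spoil the clean coupled sum over a single $(r-1)$-tuple $\bnu$.
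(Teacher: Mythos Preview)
Your plan is correct and will go through, but it is packaged differently from the paper's argument. The paper does not manipulate characters or Cauchy kernels at all. Instead it invokes seesaw reciprocity directly: the inclusion $M \subset K$ is paired with the diagonal inclusion $\g' \subset (\g')^{\oplus r}$, and one has
\[
b^\la_{\bmu} = \bigl[\widetilde{U}^\la : \textstyle\bigotimes_{i=1}^r \widetilde{U}^{\mu_i}\bigr]_{\g'}.
\]
In the stable range each $\widetilde{U}^{\mu}$ is a generalized Verma module, hence isomorphic as a $\k'$-module to $\C[\p'_+]\otimes F^{\mu}$ (or $\C[\p'_+]\otimes F^{\mu^+}\otimes F^{\mu^-}$ in the $\GL$ case). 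The tensor product on the right then contains $r$ copies of $\C[\p'_+]$; one of them is matched against the $\C[\p'_+]$ in $\widetilde{U}^\la$, and the remaining $r-1$ copies are expanded using the very same identities you quote (even-row, arbitrary, even-column decompositions of $\C[{\rm SM}_n]$, $\C[\M_{p,q}]$, $\C[{\rm AM}_n]$), producing the sum over $\bnu$. So your ``cancellation of one factor of the invariant character'' is exactly this step, translated into characters. What the paper's formulation buys is that no formal division in a power-series ring and no coefficient-extraction step needs separate justification: one works entirely with honest $\k'$-module isomorphisms, and the injectivity of $\la\mapsto\widetilde{U}^\la$ from Howe duality replaces your appeal to linear independence of characters. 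Your route is equally valid once the stable range is fixed; it is closer in spirit to the symmetric-function arguments in the Howe--Tan--Willenbring paper the authors cite. One small correction: in case~(c) the Howe partner of $\Sp_{2k}$ on $\C[\M_{2k,n}]$ is $\so_{2n}$ (real form ${\rm SO}^*(2n)$), not $\O_{2n}$; the relevant invariant ring is $\C[{\rm AM}_n]$, which is what gives the even-column condition.
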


The specific ``stable range'' mentioned above can be found in the full statement of Theorem~\ref{thm:main result} below.
Essentially, the stable range is an upper bound on the number of parts allowed in the partition $\la$ and in each of the partitions in $\bmu$, and this upper bound depends on the smallest of the parameters $k_i$ given in~\eqref{G info}.
In other words, for fixed $\la$ and $\bmu$, the formulas in the theorem above are valid as long as all the factors in $M$ have sufficiently high rank.

As our second main result, we obtain a fully general and more concrete multiplicity formula, by restricting our attention to the special case where $M$ is the minimal direct sum embedding, that is, where  $k_1 = \cdots = k_r = 1$.
(This corresponds to the special case mentioned above where $P_\R$ is a minimal parabolic subgroup, meaning that our branching rule gives the $K_\R$-type multiplicities in the \emph{principal series}.)
In this special case where $M$ is minimal, we use the letter $\bm{\delta}$ rather than $\bmu$ to parametrize representations of $M$, to emphasize that $\bm{\delta}$ is a vector of integers (in particular, certain ``differences'' $\delta_i$) rather than of partitions.
Specifically, we realize the multiplicity $b^\la_{\bm{\delta}}$ as the number of tableaux in a certain set $\mathcal{T}(K)^\la_{\bm{\delta}}$.
These ``$K$-tableaux'' are defined in Definition~\ref{def:KC-tableaux} and in~\eqref{T(KC)}.
Roughly speaking, the parameter $k$ determines the maximum entry in these semistandard tableaux, the partition $\la$ determines the shape of the tableaux,  and the integer $k$-tuple $\bm{\delta}$ determines their weight, which encodes the frequencies of the entries.
Our Theorem~\ref{thm:K to M} takes the following form:

\begin{thm}[Branching rule from $K$ to the minimal $M$]
\label{theorem 2 in intro}

Let $K$ and $M$ be as given in~\eqref{G info}, where $M$ is given by the parameters $k_1 = \cdots = k_r = 1$.
Then we have
\[
b^\la_{\bm{\delta}} = \# \mathcal{T}(K)^\la_{\bm{\delta}},
\]
where $\mathcal{T}(K)^\la_{\bm{\delta}}$ is the set of $K$-tableaux with shape $\la$ and weight $\bm{\delta}$, to be defined in~\eqref{T(KC)}.

\end{thm}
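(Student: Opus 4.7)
I would prove Theorem~\ref{theorem 2 in intro} by iterating a classical rank-one branching rule $k$ times, and by identifying the resulting enumeration of branching paths with the set $\mathcal{T}(K)^\la_{\bm\delta}$ of $K$-tableaux from Definition~\ref{def:KC-tableaux}.

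The key single-step ingredient is the classical branching rule from a rank-$n$ classical group of the same type as $K$ to the product of its rank-$(n-1)$ and rank-$1$ analogs: for $\GL$ this is the Gelfand--Tsetlin rule, equivalent to adding a horizontal strip; for $\O$, Frobenius's interlacing rule; and for $\Sp$, the King/Sundaram double-interlacing rule. Iterating $k$ of these steps from $K$ down to $M = M_1 \times \cdots \times M_k$ (with each $M_i$ of rank one) produces a chain
\[
\la = \la^{(k)} \supseteq \la^{(k-1)} \supseteq \cdots \supseteq \la^{(0)} = \emptyset
\]
(suitably modified for the rational case of $\GL$ and for the ``doubled'' chains in $\Sp$), decorated at step $i$ by the specific irreducible of the rank-one factor $M_i$ that appears, which is precisely the datum recorded by $\delta_i$.

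Next, I would encode each such decorated chain as a filling of $\la$ in which the boxes added at step $i$ receive the entry $i$. Verifying that the row- and column-strictness forced by the interlacing conditions agrees with the semistandardness requirements of Definition~\ref{def:KC-tableaux}, and that the $k$-tuple of ``differences'' recorded along the chain matches $\bm\delta$, would yield a bijection between branching paths and $\mathcal{T}(K)^\la_{\bm\delta}$ and hence the desired equality $b^\la_{\bm\delta} = \#\mathcal{T}(K)^\la_{\bm\delta}$. As a sanity check valid in the stable range of Theorem~\ref{theorem 1 in intro}, specializing $k_1 = \cdots = k_r = 1$ forces each $\mu_i$ to be a one-row partition, so each Littlewood--Richardson coefficient $\LRC{\la}{\bmu\bnu}$ collapses by the Pieri rule to a sum over horizontal strips, reproducing the same combinatorial count.

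The principal technical hurdle is the orthogonal and symplectic cases, where the interlacing conditions are more intricate than in the $\GL$ case: for $\O_k$ one must carefully track the $\O_1$ sign characters via the determinant twist, while for $\Sp_{2k}$ one must manage the nested pairs of strips arising at each step. A separate but manageable wrinkle in the $\GL$ case is that rational partitions $\la = (\la^+, \la^-)$ give rise to pairs of tableaux $(T^+, T^-)$ whose signed weight equals $\bm\delta$, which reduces to the classical polynomial description on each component. Because the iterated classical rank-one branching rules are valid for all $\la$, no stable-range hypothesis is needed in the final statement.
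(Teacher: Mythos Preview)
Your proposal is correct and shares the paper's overall architecture: derive the branching rule from $K$ to the minimal $M$ by iterating a single-step branching rule from the rank-$i$ group to the product of its rank-$(i-1)$ and rank-$1$ analogues, then biject the resulting chains with the $K$-tableaux of Definition~\ref{def:KC-tableaux}. The substantive difference lies in how that single-step rule is obtained. You invoke the classical rules (Gelfand--Tsetlin for $\GL$, the King--Proctor interlacing rule for $\O$, the King double-strip rule for $\Sp$) as known results from the literature. The paper instead \emph{rederives} each of these single-step rules from scratch via Howe duality and seesaw reciprocity~\eqref{eq:seesaw}, reading them off from the $\k'$-structure of the infinite-dimensional modules $\widetilde{U}^\la$ given in Lemmas~\ref{lemma:stable} and~\ref{lemma:k=1}; for $\O_k$ the disconnectedness issue is handled separately by Lemma~\ref{lemma:Ok assoc rule} on associated partitions. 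Your route is more economical if one is willing simply to cite King and Proctor; the paper's route is precisely its advertised contribution, namely a uniform conceptual derivation of all three rank-one branching rules as consequences of Howe duality. One small caveat: your proposed sanity check against Theorem~\ref{theorem 1 in intro} with $k_1 = \cdots = k_r = 1$ has almost no content, since the stable range there then forces $\ell(\la) \leq 1$ (or similarly trivial bounds), so it verifies only one-row shapes.
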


In the corollaries in Section~\ref{sec:examples}, we record explicit formulas for $b^\la_{\bm{\delta}}$ in the special cases where $\la$ is given by one-row or one-column shapes.

\subsection*{Unified approach via Howe duality}

The contribution of this paper consists of not only the two main results highlighted above, but also the philosophy that unites all of the proofs.
In particular, we approach the problem via \emph{Howe duality}, whereby each of our three compact groups $K_\R$ is paired with a real noncompact group $G'_\R$.
(See details in Theorem~\ref{thm:Howe duality}.)
In each of the three Howe duality settings, we have a multiplicity-free decomposition of a certain space under the joint action of $K$ and the complexified Lie algebra $\g'$; in this decomposition, the $K$-modules are finite-dimensional, while the $\g'$-modules are infinite-dimensional.
% (In fact, they are unitarizable highest weight Harish-Chandra modules.)
By using \emph{seesaw reciprocity} in each Howe duality setting, we are able to obtain the desired (finite-dimensional) branching multiplicities from $K$ to $M$, by instead finding the (infinite-dimensional) branching multiplicities from $(\g')^{\oplus r}$ to $\g'$.
Although it may seem surprising that the infinite-dimensional branching rule is the easier one, we are able to exploit the well-known $\k'$-structure of the $\g'$-modules in question.
Despite this uniform approach, each classical group $K$ presents a different complication which we address in the proofs: $\O_k$ is disconnected, $\GL_k$ has rational representations which are not polynomial, and $\Sp_{2k}$ has a branching rule down to $\Sp_{2(k-1)}$ which is not multiplicity-free.

Through the lens of Howe duality, this paper also serves as a retrospective look at the following restriction problems: from $\O_k$ down to $\O_{k-1} \times \O_1$, from $\GL_k$ down to $\GL_{k-1} \times \GL_1$, and from $\Sp_{2k}$ down to $\Sp_{2(k-1)} \times \Sp_2$.
The corresponding branching rules were written down some time ago, in various degrees of explicitness, by King~\cite{King75}*{equations (4.8), (4.14--15)} and Proctor~\cite{Proctor}*{Props.~10.1 and 10.3}; see also~\cites{KY,KT,Lepowsky,Yacobi}.
In the course of proving Theorem~\ref{thm:K to M}, we obtain these branching rules in a uniform manner as a consequence of Howe duality.
In this respect, our paper is similar in spirit to~\cite{HTW}, which likewise used Howe duality as a unifying framework for (stable) branching rules for classical groups.
The present paper is also a natural sequel to our previous work~\cite{CEW}, where we used this approach to express tensor multiplicities for classical groups via contingency tables.
In general, similar restriction problems can also be approached (at least for connected groups) using the theory of crystal graphs~\cite{BumpSchilling} or Littelmann path algebras~\cites{Littelmann94,Littelmann95}.

We point out that our ``$K$-tableaux'' in Theorem~\ref{thm:K to M} are a synthesis of the orthogonal~\cite{KingEl}, rational~\cites{Stembridge,KingGYT}, and symplectic~\cite{ProctorRSK}*{p.~30} tableaux introduced by King, El Sharkaway, Stembridge, and Proctor.
Several authors subsequently created variations of orthogonal tableaux, such as those of King--Welsh~\cite{KingWelshOrthogonal}, Koike--Terada~\cite{KT}, Proctor~\cite{Proctor}, and Sundaram~\cite{Sundaram}.
Also closely related to our $K_\R$-type multiplicity formulas is the recent preprint~\cite{Frohmader} by the third author, which uses yet another set of orthogonal tableaux (from the literature on crystal bases) to give a combinatorial formula for graded multiplicity in the Kostant--Rallis setting for the symmetric pair $(\GL_n, \O_n)$.

\subsection*{Acknowledgments}

We would like to thank Nolan Wallach for his helpful comments that improved the clarity and precision of the introduction.

\section{Preliminaries}

\subsection*{Vectors of partitions}

Throughout the paper, we write $\mathbb{N}$ to denote the set of nonnegative integers.
A \emph{partition} is a weakly decreasing finite sequence of positive integers (\emph{parts}).
It is typical to identify a partition $\la$ with its \emph{Young diagram}, obtained by arranging boxes in left-justified rows such that the row lengths from top to bottom are given by the parts of $\la$.
In this way, one speaks of the ``rows'' or ``columns'' of $\la$ by viewing it as a Young diagram.
We write $\ell(\la)$ to denote the length of $\la$, which is also the number of rows in its Young diagram.
We say that $\la$ has \emph{even rows} (resp., \emph{columns}) if all rows (resp., columns) contain an even number of boxes.
We write $0$ to denote the empty partition.
We write $\mu \subseteq \la$ if the Young diagram of $\mu$ is contained in that of $\lambda$, where both Young diagrams are aligned with respect to their upper-left corners.

We use boldface Greek letters to denote vectors of partitions $\mu_i$:
\begin{equation}
\label{bold lambda}
    \bmu = (\mu_1, \ldots, \mu_r).
\end{equation}
We note that this use of subscripts is somewhat nonstandard, since in the literature one often sees $\mu_i$ denoting the $i$th component of a partition $\mu$.
In this paper, the notation $\mu_i$ never refers to the $i$th part of a partition $\mu$; indeed, we will have no need to reference individual parts at all.

In the case of the group $\GL_k$, we will need to generalize partitions by allowing nonpositive parts.
We will fix the total number of parts to be $k$.
Any such generalized partition $\mu$ can be expressed uniquely as an ordered pair $\mu = (\mu^+, \mu^-)$, where $\mu^+$ and $\mu^-$ are true partitions: in particular, $\mu^+$ consists of the positive parts of $\mu$, and $\mu^-$ is the partition obtained by negating and reversing the negative parts of $\mu$.
For example, if $\mu = (6,3,3,2,0,0,-1,-3,-5)$, then we have $\mu^+ = (6,3,3,2)$ and $\mu^- = (5,3,1)$.
Because the total number of parts is a fixed value $k$, there is no ambiguity in recovering the original $\mu$ from the pair $(\mu^+, \mu^-)$.
In this context of generalized partitions, we will write
\begin{align*}
\bmu &= (\mu_1, \ldots, \mu_r) = \Big( (\mu_1^+, \mu_1^-), \ldots, (\mu_r^+, \mu_r^-) \Big), \\
\bmu^+ & \coloneqq (\mu_1^+, \ldots, \mu_r^+),\\
    \bmu^- & \coloneqq (\mu_1^-, \ldots, \mu_r^-).
\end{align*}

\subsection*{Irreducible finite-dimensional representations of classical groups}

Let $\M_{p,q}$ denote the space of complex $p \times q$ matrices, and let $\M_n \coloneqq \M_{n,n}$.
The \emph{complex classical groups} consist of the general linear, orthogonal, and symplectic groups, defined as follows:
\begin{align*}
    \GL_k & \coloneqq \{ g \in \M_k : \det g \neq 0 \},\\
    \O_k & \coloneqq \{ g \in \GL_k : g^T = g^{-1} \},\\
    \Sp_{2k} & \coloneqq \{g \in \GL_{2k} : g^T J g = J \},
\end{align*}
where $J = \left[ \begin{smallmatrix}
    0 & I\\-I & 0
\end{smallmatrix} \right]$ and $I$ is the $k \times k$ identity matrix.
Note that these groups are precisely the complexifications $K$ arising in our motivating setting~\eqref{G info}.
For each classical group $K$, let $\widehat{K}$ denote the set of equivalence classes of finite-dimensional irreducible complex representations of $K$ with rational matrix coefficients.
It is well known that the elements of $\widehat{K}$ can be labeled by certain partitions $\la$ (or pairs thereof) as given below.
Throughout the paper, we write $U^\la$ to denote a model for the irreducible representation of $K$ labeled by $\lambda \in \widehat{K}$.
When treating a particular group $K$, we follow~\cite{HTW} in denoting the irreducible representations $U^\la$ as follows:
\begingroup
\renewcommand*{\arraystretch}{1.5}
\begin{equation}
\label{KC hat}
    \begin{array}{l|l|l}
        K & \widehat{K} & \text{Irrep.~$U^\la$} \\ \hline
       \O_k & \{ \la : \text{first two columns of $\la$ contain $\leq k$ boxes} \} & E^\la_k\\
       \GL_k & \{ \la = (\la^+, \la^-) : \ell(\la^+) + \ell(\la^-) \leq k \} & F^\la_k\\
       \Sp_{2k} & \{ \la : \ell(\la) \leq k \} & V^\la_{2k}
    \end{array}
\end{equation}
\endgroup
If $K = \GL_k$ or $\Sp_{2k}$, then $U^\la$ is the irreducible representation of $K$ with highest weight $\la$ (in standard coordinates).
For $K = \O_k$, which is not connected, the situation is more subtle; see~\cite{GW}*{pp.~438--9} or~\cite{FH}*{\S19.5} for a detailed construction of the representations $E^\la_k$.

\begin{dfn}
    \label{def:associated}
    Let $\la \in \widehat{\O}_k$.
   Its \emph{associated partition} $\overline{\la} \in \widehat{\O}_k$ is the partition obtained from $\la$ by changing the length of its first column from $\ell(\la)$ to $k - \ell(\la)$.
\end{dfn}

Note that taking the associated partition twice recovers the original partition $\la$.
The motivation behind Definition~\ref{def:associated} is the fact~\cite{FH}*{Ex.~19.23} that
\begin{equation}
    \label{tensor det}
    E^{\overline{\la}}_k \cong E^\la_k \otimes \mathbf{det}_k,
\end{equation}
where $\mathbf{det}_k$ denotes the one-dimensional determinant representation of $\O_k$.

\subsection*{Littlewood--Richardson coefficients}

If $U$ and $V$ are representations of a group $H$, then we define
\[
[U:V] := \dim \hom_{H}(U,V).
\]
Note that in general $[U:V]$ may be an infinite cardinal, but if $U$ and $V$ are finite-dimensional, then $[U:V]$ is a nonnegative integer.  
Moreover, if $U$ is irreducible and $V$ is completely reducible as an $H$-representation, then $[U:V]$ is the \emph{multiplicity} of $U$ in the isotypic decomposition of $V$.

Let $\la,\mu,\nu$ be partitions with length at most $n$.  
For all $m \geq n$, it is a standard fact that $\left[F^\la_n: F^\mu_n \otimes F^\nu_n \right] =\left[F^{\la}_m: F^{\mu}_m \otimes F^{\nu}_m \right]$.  
(See \cite{Macdonald}*{\S I.9}.)  
Thus, without ambiguity one can define $c^{\la}_{\mu \nu} := [F^\la_n: F^\mu_n \otimes F^\nu_n]$.  
Equivalently, we have $F^\mu_n \otimes F^\nu_n \cong \bigoplus_{\la} c^\la_{\mu \nu} F^\la_n$, where we use the shorthand $c U \coloneqq U^{\oplus c}$. The numbers $c^{\la}_{\mu \nu}$ are known as the \emph{Littlewood--Richardson coefficients}, and play a central role in algebraic combinatorics and classical representation theory~\cite{HoweLee}.

Recall that a \emph{semistandard tableau of shape $\la$} is the Young diagram of $\la$ in which the boxes are filled with entries from some totally ordered alphabet, say $\{1, \ldots, k\}$, such that the entries in each row are weakly increasing, and those in each column are strictly increasing.
The \emph{content} of a tableau $T$ is the $k$-tuple whose $i$th component is the number of occurrences of the entry $i$ in $T$.
The \emph{word} of $T$ is the sequence obtained by reading the entries of $T$ from right to left in each row, taking the rows from top to bottom.
We write $|T|$ to denote the number of boxes in $T$.

The Littlewood--Richardson coefficients carry the following well-known combinatorial interpretation.
If $\la$ and $\mu$ are partitions such that $\mu \subseteq \la$, then we write $\lambda / \mu$ to denote the \emph{skew diagram} consisting of the boxes in $\lambda$ which are not in $\mu$.
One can then speak of semistandard skew tableaux of shape $\la / \mu$.
We write $|\lambda / \mu|$ to denote the number of boxes in the skew diagram $\lambda / \mu$.
A semistandard skew tableau $T$ is called a \emph{Littlewood--Richardson (LR) tableau} if in the word of $T$ the number of occurrences of $i+1$ never exceeds the number of occurrences of $i$ for each $1 \leq i < k$.
We have the following \emph{Littlewood--Richardson rule}~\cite{Fulton}*{Prop.~3, p.~64}:
\begin{equation}
    \label{LR rule}
    c^\la_{\mu\nu} = \#\{\text{LR tableaux with shape $\la/\mu$ and content $\nu$}\}.
\end{equation}
In this paper, we will encounter~\eqref{LR rule} only in special cases where $\nu$ has length at most 2.
For this reason, we record the following definition of (horizontal) \emph{strips} and \emph{double strips}.

\begin{dfn}
\label{def:strip}
A skew diagram is said to be a \emph{strip} if it contains at most one box in each column.
A skew diagram is said to be a \emph{double strip} if it contains at most two boxes in each column.
\end{dfn}

The following special case of~\eqref{LR rule}, where $\nu = (m)$ for some $m \in \mathbb{N}$, is known as the \emph{Pieri rule}:
\begin{equation}
    \label{Pieri}
    c^\lambda_{\mu, (m)} = \begin{cases}
    1, & \lambda / \mu \text{ is a strip and } |\lambda / \mu| = m,\\
    0& \text{otherwise}.
\end{cases} 
\end{equation}
Similarly, in the special case where $\nu = (\ell,m)$, the LR tableaux in~\eqref{LR rule} contain only the entries 1 and 2, and therefore contain at most two boxes in each column.
Consequently,
\begin{equation}
    \label{LR double}
    \text{if $c^\la_{\mu, (\ell, m)} \neq 0$, then $\la / \mu$ is a double strip and $|\la / \mu| = \ell + m$.}
\end{equation}
As an example, let $\la = (6,5,3,1)$, $\mu = (5,2,1)$, and $\nu = (4,3)$.
We observe that $\la / \mu$ is a double strip containing 7 boxes, and therefore by~\eqref{LR double} it is possible that $c^\la_{\mu \nu}$ is nonzero.
In fact, by~\eqref{LR rule}, $c^\la_{\mu \nu}$ is the number of LR tableaux of shape $\la / \mu$ whose entries consist of four 1's and three 2's.
There are three such tableaux:
\ytableausetup{smalltableaux}
\[
\ytableaushort{\none\none\none\none\none1,\none\none111,\none22,2}
\qquad
\ytableaushort{\none\none\none\none\none1,\none\none112,\none12,2}
\qquad
\ytableaushort{\none\none\none\none\none1,\none\none112,\none22,1}
\]
Therefore $c^\la_{\mu \nu} = 3$.
The words of the tableaux above are $1111222$, $1211212$, and $1211221$, respectively.

By considering more than two tensor factors, one arrives at a natural generalization of the classical Littlewood--Richardson coefficients: 
in particular, given a partition $\la$, and a vector of partitions $\bmu$ as in \eqref{bold lambda}, where $\ell(\la) \leq n$ and each $\ell(\mu_i) \leq n$, we define the \emph{generalized Littlewood--Richardson coefficient}
\begin{equation*}
\label{LRC definition}
    \LRC{\la}{\bmu} \coloneqq \left[ F^\la_n:
    F^{\mu_1}_n \otimes \cdots \otimes F^{\mu_r}_n \right]_{\textstyle.}
\end{equation*}  
Given another vector $\bnu = (\nu_1, \ldots, \nu_s)$ with each $\ell(\nu_i) \leq n$, we will also write 
\begin{equation}
\label{LRC definition munu}
\LRC{\la}{\bmu \bnu} \coloneqq \left[ F^\la_n:
    F^{\mu_1}_n \otimes \cdots \otimes F^{\mu_r}_n \otimes F^{\nu_1}_n \otimes \cdots \otimes F^{\nu_s}_n \right]_{\textstyle.}
\end{equation}

\subsection*{Howe duality}
In this subsection, we recall the three \emph{dual pair settings} in which the irreducible representations of the classical groups $K$ arise.
For complete details, we refer the reader to Howe's paper~\cite{Howe89}, as well as the analytic perspective taken by Kashiwara--Vergne~\cite{KV}.

We summarize the Howe duality data in the table in Theorem~\ref{thm:Howe duality} below.
In each setting, the classical group $K$ acts naturally on a space $X$, which is a direct sum of copies of the defining representation of $K$, and of its contragredient representation in the case where $K = \GL_k$.
For the sake of concreteness, we realize $X$ as a space of matrices.
As usual, the action of $K$ on $X$ yields an action on the space $\C[X]$ of polynomial functions on $X$.

Let $\mathcal{D}(X)$ denote the Weyl algebra of polynomial-coefficient differential operators on $X$, and let $\mathcal{D}(X)^{K}$ denote the subalgebra of $K$-invariant operators.  
For each classical group $K$, there is a finite generating set of the associative algebra $\mathcal{D}(X)^K$ which spans a Lie subalgebra $\g'$ of $\mathcal{D}(X)^K$.
Thus, there exists a surjective homomorphism $\omega: U(\g') \longrightarrow \mathcal{D}(X)^{K}$, and we can view $\C[X]$ as a module for the Lie algebra $\g'$ via this homomorphism.
Similarly, if $U^\la$ is a finite-dimensional irreducible representation of $K$ with rational matrix coefficients,
then $\mathcal{D}(X)^{K}$ and hence $\g'$
acts on the multiplicity space
\[
\widetilde{U}^\la \coloneqq \hom_{K}(U^\la, \C[X]) \cong ((U^\la)^* \otimes \C[X])^{K}.
\]
Explicitly, the action of an invariant  differential operator $D\in \mathcal{D}(X)^{K}$ on $\widetilde{U}^\la$ is given by
$D\cdot(\sum_i u_i^* \otimes f_i)=\sum_{i} u_i^* \otimes (Df_i)$.
Further, the algebra $\mathcal{D}(X)^K$ acts irreducibly on the multiplicity space $\widetilde{U}^\la$, and hence $\g'$ does as well.

One can say more about the Lie algebra $\g'$ described above.
In fact, $\g'$ is the complexified Lie algebra of a real reductive Lie group $G'_\R$, with maximal compact subgroup $K'_\R$, such that $G'_\R/K'_\R$ is a Hermitian symmetric space.
Let $\k'$ denote the complexified Lie algebra of $K'_\R$, and let $K'$ denote the complexification of $K'_\R$.
Roughly speaking, a \emph{$(\g',K')$-module} is a 
complex vector space carrying representations of both $\g'$ and $K'$ such that  $K'$ acts locally finitely and the actions of $\g'$
and $K'$ are compatible; see~\cite{SchmidNotes}*{Def.~3.2.3} for details.
In each Howe duality setting described above, the $\mathfrak{k'}$-action integrates to a $K'$-action and hence 
$\widetilde{U}^\la$ can be viewed as a $(\g',K')$-module.
Further, $\widetilde{U}^\la$ is a highest weight $\g'$-module, which follows from work of Harish-Chandra~\cites{HC55,HC56} since $G'_\R/K'_\R$ is Hermitian symmetric.

\begin{thm}[Howe duality \cite{Howe89}, \cite{KV}]
\label{thm:Howe duality}
Assume one of the three settings in the following table:

\begin{center} 
\begin{tblr}{colspec={|Q[m,c]|Q[m,c]|Q[m,c]|Q[m,c]|Q[m,c]|},stretch=1.5}

\hline

$K$ & $X$ & $G'_{\R}$ & $(\g', \k')$ & {\textup{$K$-action on $f \in \C[X]$, where $g \in K$} \\ \textup{$K'$-action on $f \in \C[X]$, where $g' \in K'$}} \\

\hline[2pt]

$\O_k$ & $\M_{k,n}$ & $\Sp(2n, \R)$ & $(\sp_{2n}, \gl_n)$ & $\begin{aligned}[t] (g \cdot f)(A) &= f(g^{-1}A) \\ (g' \cdot f)(A) &= f(Ag')\end{aligned}$\\

\hline

$\GL_k$ & 
$\M_{k,p} \oplus \M_{k,q}$ & ${\rm U}(p,q)$ & 
$(\gl_{p+q}, \gl_p \oplus \gl_q)$ & $\begin{aligned}[t] (g \cdot f)(A,B) &= f(g^{-1}A, (g^{-1})^T B) \\ ((g'_1, g'_2) \cdot f)(A,B) &= f(Ag'_1, Bg'_2) \end{aligned}$\\

\hline

$\Sp_{2k}$ & $\M_{2k,n}$ & ${\rm SO}^*(2n)$ & $(\so_{2n}, \gl_n)$ & $\begin{aligned}[t] (g \cdot f)(A) &= f(g^{-1}A) \\ (g' \cdot f)(A) &= f(Ag') \end{aligned} $ \\

\hline

\end{tblr}
\end{center}

\noindent We have the following multiplicity-free decomposition of $\C[X]$ as a $K \times (\g',K')$-module:
\begin{equation}
    \label{Howe decomp}
    \C[X] \cong \bigoplus_{\la \in \Sigma} U^\la \otimes \widetilde{U}^\la,
\end{equation}
where $\Sigma \coloneqq \{ \la \in \widehat{K} : \widetilde{U}^\la \neq 0\}$ and the irreducible modules $U^\la$ and $\widetilde{U}^\la$ are given in the following table:

\begin{center} 
\begin{tblr}{colspec={|Q[m,c]|Q[m,c]|Q[m,c]|Q[m,c]|Q[m,c]|Q[m,c]|},stretch=1.5}

\hline

$K$ & $(\g', \k')$ & $\Sigma \subseteq \widehat{K}$ & $U^\la$ & $\widetilde{U}^\la$ \\

\hline[2pt]

$\O_k$ & $(\sp_{2n}, \gl_n)$ & $\ell(\la) \leq n$ & $E^\la_k$ & $\widetilde{E}^\la_{2n}$\\

\hline

$\GL_k$ & 
$(\gl_{p+q}, \gl_p \oplus \gl_q)$ & {$\ell(\la^+) \leq p$, \\ $\ell(\la^-) \leq q $\phantom{,}} & $F^\la_k$ & $\widetilde{F}^\la_{p,q}$\\

\hline

$\Sp_{2k}$ & $(\so_{2n}, \gl_n)$ & $\ell(\la) \leq n$ & $V^\la_{2k}$ & $\widetilde{V}^\la_{2n}$\\

\hline

\end{tblr}
\end{center}

\noindent Furthermore, there is an injective map $\la \mapsto \xi$ from $\Sigma$ to the set of dominant integral weights for $\k'$, such that as a $(\g',K')$-module, $ \widetilde{U}^\la$ is isomorphic to the simple $\g'$-module with highest weight $\xi$.

\end{thm}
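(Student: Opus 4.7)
The plan is to follow Howe's original treatment in~\cite{Howe89}, adapted uniformly across the three classical dual pair settings. In each row of the table, $K$ acts reductively on $\C[X]$ by polynomial automorphisms, and the algebra $\mathcal{D}(X)^K$ of $K$-invariant polynomial-coefficient differential operators commutes by construction with this $K$-action. The classical First Fundamental Theorem (FFT) of invariant theory provides an explicit finite generating set for $\mathcal{D}(X)^K$ whose span carries a natural Lie bracket making it isomorphic to $\g'$. The decomposition~\eqref{Howe decomp} then follows from a double-commutant argument, and the identification of each $\widetilde{U}^\la$ as an irreducible highest weight $\g'$-module comes from the Hermitian symmetric structure of $G'_\R/K'_\R$.

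Concretely, I would first write down explicit quadratic operators in $\mathcal{D}(X)^K$: multiplication operators $\sum_a x_{a,i}x_{a,j}$ (or contractions mixing the two summands in the $\GL_k$ case), Euler-type operators $\sum_a x_{a,i}\partial_{a,j}$, and ``Laplace'' operators $\sum_a \partial_{a,i}\partial_{a,j}$, with signs and pairings chosen compatibly with the bilinear form preserved by $K$. A direct commutator computation in each case shows that these quadratic operators span a Lie algebra isomorphic to $\g'$, with $\k'$ given by the degree-preserving (Euler-type) generators and the triangular decomposition $\g' = \p'^+ \oplus \k' \oplus \p'^-$ given by polynomial degree. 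The FFT for $K$ (orthogonal, general linear, symplectic) then ensures that these quadratic operators actually generate $\mathcal{D}(X)^K$ as an associative algebra. Since $\C[X]$ is completely reducible as a rational $K$-module in each polynomial degree, we obtain the isotypic decomposition $\C[X] \cong \bigoplus_\la U^\la \otimes \widetilde{U}^\la$ with $\widetilde{U}^\la = \hom_K(U^\la, \C[X])$; and because $\mathcal{D}(X)^K$ is the full commutant of $K$ in $\operatorname{End}(\C[X])$, the double-commutant principle forces each nonzero $\widetilde{U}^\la$ to be irreducible under $\mathcal{D}(X)^K$ and hence under $\g'$, giving the multiplicity-free joint decomposition.

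To finish, I would identify $\widetilde{U}^\la$ with a simple highest weight $\g'$-module. Operators in $\p'^-$ lower polynomial degree, so the joint kernel of $\p'^-$ in $\widetilde{U}^\la$ is nonzero and finite-dimensional; by irreducibility it is a single irreducible $\k'$-module with some highest weight $\xi$, and this defines the map $\la \mapsto \xi$. One computes $\xi$ explicitly by locating a harmonic vector of minimal polynomial degree inside the $U^\la$-isotypic component of $\C[X]$ via the $K$-harmonic decomposition of $\C[X]$ carried out in~\cite{Howe89} and~\cite{KV}, and injectivity of $\la \mapsto \xi$ follows because $\la$ can be recovered from $\xi$ through an explicit shift by the ``minimal $\k'$-type'' coming from the oscillator/Fock representation. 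The main obstacle is this last bookkeeping: the disconnectedness of $\O_k$ (whose irreducibles are indexed by partitions per~\eqref{KC hat} rather than by dominant weights of a maximal torus) and the need in the $\GL_k$ case to handle rational representations using both summands of $X = \M_{k,p} \oplus \M_{k,q}$ require case-by-case care, but the underlying mechanism is uniform across the three rows.
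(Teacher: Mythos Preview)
The paper does not supply its own proof of this theorem: it is stated as a known result, attributed via the citations \cite{Howe89} and \cite{KV}, and used as a black box thereafter. Your outline is a faithful sketch of the argument in those references---FFT for the classical group $K$ to identify the quadratic generators of $\mathcal{D}(X)^K$ as spanning $\g'$, then a double-commutant argument for the multiplicity-free decomposition, then the Hermitian symmetric structure to pin down $\widetilde{U}^\la$ as a highest weight module---so there is no discrepancy in approach, only in that the paper omits the argument entirely.

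One small caution on conventions: in the paper's setup the parabolic is $\q' = \k' \oplus \p'_+$ with $\p'_+$ annihilating the highest weight vector, so in the polynomial model it is $\p'_+$ (not $\p'_-$) that must correspond to the degree-lowering operators in order for the minimal-degree $\k'$-type to be the \emph{highest} weight space. Your sentence ``Operators in $\p'^-$ lower polynomial degree'' would, taken literally, make $\widetilde{U}^\la$ a lowest weight module; this is purely a sign-labelling issue and does not affect the substance of your argument, but you should align it with the paper's choice of $\p'_\pm$ before writing anything out in detail.
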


In order to describe the structure of the modules $\widetilde{U}^\la$ in~\eqref{Howe decomp}, we recall some standard facts from the theory of Hermitian symmetric pairs $(\g', \k')$; see the exposition in~\cite{EHP}*{\S2.1}.
There exists a distinguished element $z$ in the center of $\k'$ such that $\operatorname{ad} z$ acts on $\g'$ with eigenvalues $0$ and $\pm 1$.
This yields a triangular decomposition 
$\g' = \p'_- \oplus \k' \oplus \p'_+$,
where $\p'_{\pm} = \{ x \in \g' : [z, x] = \pm x\}$.
In the dual pair settings listed in Theorem~\ref{thm:Howe duality}, where $\g'$ arises as the complexified Lie algebra of $G'_\R$, the realizations of $\g'$, $\k'$, and $\p'_+$ are given in Table~\ref{table:gkp} below.
We write ${\rm SM}_n$ (resp., ${\rm AM}_n$) to denote the space of symmetric (resp., skew-symmetric) $n \times n$ complex matrices.
For details of the explicit realizations of the real groups $G'_\R$, we refer the reader to Chapter 1 of~\cite{GW}.

\begin{table}[H]
    \centering
    \begin{center} 
\begin{tblr}{colspec={|Q[m,c]|Q[m,c]|Q[m,c]|Q[m,c]|Q[m,c]|Q[m,c]|}}

\hline

$K$ & $\g'$ & {$\g'$ \\[1ex] $\left\{\left[\begin{smallmatrix}A&B\\C&D\end{smallmatrix}\right]  : \ldots \right\}$} & {$\k'$ \\[1ex] $\left\{\left[\begin{smallmatrix}A&0\\0&D\end{smallmatrix}\right]\right\}$} & {$\p'_+$ \\[1ex] $\left\{\left[\begin{smallmatrix}
    0 & B \\ 0 & 0
\end{smallmatrix}\right]\right\}$} & $\text{$\C[\p'_+]$ as $\k'$-module}$\\

\hline[2pt]

$\O_k$ & $\sp_{2n}$ & $\begin{aligned}[t] A&=-D^T,\\[-1ex] B & = B^T, \\[-1ex] C &= C^T \end{aligned}$ & $\gl_n$ & ${\rm SM}_n$ & $\quad\displaystyle\bigoplus_{\mathclap{\substack{\nu:\\ \ell(\nu) \leq n,\\ \textup{even rows} \\ \phantom{'}}}}  F^\nu_n$ \\ 

\hline

$\GL_k$ & $\gl_{p+q}$ & no conditions & $\gl_p \oplus \gl_q$ & $\M_{p,q}$ & $\qquad \displaystyle \bigoplus_{\mathclap{\substack{\nu:\\\ell(\nu) \leq \min\{p,q\}}}} F^\nu_p \otimes F^\nu_q$ \\ 

\hline

$\Sp_{2k}$ & $\so_{2n}$ & $\begin{aligned}[t] A&=-D^T,\\[-1ex] B & = -B^T, \\[-1ex] C &= -C^T \end{aligned}$ & $\gl_n$ & ${\rm AM}_n$ & $\quad \displaystyle \bigoplus_{\mathclap{\substack{\nu:\\\ell(\nu) \leq n,\\\textup{even columns}}}} F^\nu_n$ \\

\hline

\end{tblr}
\end{center}
    \caption{Details in the Howe duality settings of Theorem~\ref{thm:Howe duality}}
    \label{table:gkp}
\end{table}

The subalgebra $\q' = \k' \oplus \p'_+$ is a maximal parabolic subalgebra of $\g'$, with Levi subalgebra $\k'$ and abelian nilradical $\p'_+$.  
Let $\xi$ be a dominant integral weight for $\k'$, and let $L^\xi$ be the finite-dimensional simple $\k'$-module with highest weight $\xi$.
Then $L^\xi$ is also a module for $\q'$, with $\p'_+$ acting by zero.  We define the \emph{generalized Verma module}
\begin{equation*}
    N^\xi \coloneqq U(\g') \otimes_{U(\q')} L^{\xi}.
\end{equation*}
Since $\p'_-$ is abelian, and since $\p'_- \cong (\p'_+)^*$ as $\k'$-modules, we can identify $U(\p'_-)$ with $S(\p'_-) \cong \C[\p'_+]$.  
By the Poincar\'e--Birkhoff--Witt theorem, we thus obtain
\[
N^\xi \cong \C[\p'_+] \otimes L^\xi
\]
as a $\k'$-module.
It turns out that inside a certain stable range (given in the following lemma), the $\g'$-modules $\widetilde{U}^\la$ are generalized Verma modules $N^\xi$ (where $\la \mapsto \xi$ is the map described in Theorem~\ref{thm:Howe duality}).

\begin{lemma}[$\k'$-structure in stable range]
    \label{lemma:stable}

In the stable range given in the table below, we have the following $\k'$-module structure of $\widetilde{U}^\la$, up to a central shift:

\begin{center} 
\begin{tblr}{colspec={|Q[m,l]|Q[m,l]|Q[m,l]|Q[m,l]|}, stretch=1.5}

\hline

$K$ & $(\g', \k')$ & \textup{Stable range} &  $\widetilde{U}^\lambda \textup{ as $\k'$-module}$ \\

\hline[2pt]
        
$\O_k$ & $(\sp_{2n}, \gl_n)$ & $k \geq 2n - 1$ & $\widetilde{E}^\la_{2n} \cong \C[{\rm SM}_n] \otimes F^\la_n$ \\ 

\hline

$\GL_k$ & $(\gl_{p+q}, \gl_p \oplus \gl_q)$ & $k \geq p + q - 1$ & $\widetilde{F}^\la_{p,q} \cong \C[\M_{p,q}] \otimes F^{\la^+}_p \otimes F^{\la^-}_q$ \\
        
\hline

$\Sp_{2k}$ & $(\so_{2n}, \gl_n)$ & $k \geq n - 1$ & $\widetilde{V}^{\la}_{2n} \cong \C[{\rm AM}_n] \otimes F^\la_n$ \\

\hline

\end{tblr}
\end{center}

\end{lemma}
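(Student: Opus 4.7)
The plan is to identify $\widetilde{U}^\la$ with the generalized Verma module $N^\xi$ as a $(\g', K')$-module within the stable range; once this is established, the claimed $\k'$-structure follows at once from the Poincar\'e--Birkhoff--Witt isomorphism $N^\xi \cong \C[\p'_+] \otimes L^\xi$ combined with the explicit descriptions of $\p'_+$ in Table~\ref{table:gkp} and of $L^\xi$ in the lemma statement. Since Theorem~\ref{thm:Howe duality} presents $\widetilde{U}^\la$ as a simple highest weight $\g'$-module with $\k'$-highest weight $\xi$, there is a canonical $(\g', K')$-surjection $N^\xi \twoheadrightarrow \widetilde{U}^\la$, and the goal is to prove that this map is an isomorphism in the stated ranges.

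The natural tool is the classical ``separation of variables'' on $\C[X]$. In each of the three settings one constructs a subspace $\mathcal{H} \subseteq \C[X]$ of $K$-harmonic polynomials --- those annihilated by the contraction operators spanning the image of $\p'_-$ in $\mathcal{D}(X)^K$ --- and shows that multiplication yields an isomorphism
\[
\C[X] \cong \C[X]^K \otimes \mathcal{H}
\]
of $K \times \k'$-modules. In the stated stable ranges, the first fundamental theorem of invariant theory asserts that $\C[X]^K$ is a polynomial ring freely generated by the entries of $A \mapsto A^T A$, of $(A,B) \mapsto A^T B$, or of $A \mapsto A^T J A$, respectively; up to a scalar, these generators are the image under $\omega \colon U(\g') \to \mathcal{D}(X)^K$ of a basis of $\p'_+$, so that $\C[X]^K \cong \C[\p'_+]$ as $\k'$-modules. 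The $\k'$-structure of $\mathcal{H}$ is then computed by locating the joint $(K, K')$-highest weight vectors using the $(\GL_k, \GL_m)$-Cauchy identity on $\C[\M_{k,m}]$ (together with its dual version for $\GL_k$), yielding $\mathcal{H} \cong \bigoplus_\la U^\la \otimes L^\xi$ with $L^\xi$ the irreducible $\k'$-module named in the lemma: $F^\la_n$ in the orthogonal and symplectic settings, and $F^{\la^+}_p \otimes F^{\la^-}_q$ in the general linear setting. Matching $\C[X] \cong \bigoplus_\la U^\la \otimes (\C[\p'_+] \otimes L^\xi)$ against the Howe decomposition yields the desired $\k'$-isomorphism $\widetilde{U}^\la \cong \C[\p'_+] \otimes L^\xi$, with the ``central shift'' encoding the precise character by which the center of $\k'$ acts on the joint highest weight vector.

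The principal obstacle is justifying the separation of variables in the precise bounds given. This amounts to showing that the natural multiplication map $\C[X]^K \otimes \mathcal{H} \to \C[X]$ is an isomorphism or, equivalently, that $N^\xi$ is an irreducible $\g'$-module for every $\la \in \Sigma$. The stated ranges $k \geq 2n-1$, $k \geq p+q-1$, and $k \geq n-1$ are precisely the thresholds at which this irreducibility holds --- in agreement with the classification of unitary highest weight modules due to Enright--Howe--Wallach --- and outside these ranges $\widetilde{U}^\la$ can become a proper quotient of $N^\xi$, causing the clean PBW decomposition to fail. Within the stable range, however, the $\k'$-character equality extracted from the separation of variables forces the surjection $N^\xi \twoheadrightarrow \widetilde{U}^\la$ to be an isomorphism, completing the proof.
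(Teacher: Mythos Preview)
Your proposal is correct and, in fact, supplies substantially more mathematical content than the paper does. The paper's own ``proof'' of this lemma consists entirely of citations: the stable range is referred to~\cite{HTW}*{p.~1609} (with Remark~\ref{rem:stable} explaining the extension by one), the $\k'$-decompositions of $\C[\p'_+]$ to~\cite{GW} and~\cite{EHW}, and the $\k'$-structure of $\widetilde{U}^\la$ to~\cite{HTW}*{Thm.~3.2}. What you have written is essentially a sketch of the mathematics underlying those references, via separation of variables and the Enright--Howe--Wallach classification, and the sketch is accurate.

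Two small expository points. First, the first fundamental theorem gives generators of $\C[X]^K$ in all ranges; it is the absence of relations (the second fundamental theorem, or equivalently cofreeness of $\C[X]$ over $\C[X]^K$) that is range-dependent, and this is what Remark~\ref{rem:stable} invokes via Schwarz's tables. Second, your two threads --- separation of variables on one hand, and irreducibility of $N^\xi$ via Enright--Howe--Wallach on the other --- are each independently sufficient: once EHW gives irreducibility of $N^\xi$, the surjection $N^\xi \twoheadrightarrow \widetilde{U}^\la$ is already an isomorphism and PBW finishes the argument without any appeal to harmonics. As written, the last paragraph risks sounding circular (using separation of variables to prove irreducibility and vice versa), but since you cite EHW as the external input the logic is sound.
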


\begin{proof}
The stable range in each setting is given in~\cite{HTW}*{p.~1609}; see Remark~\ref{rem:stable} below.
The $\k'$-decompositions of $\C[\p'_+]$ are given in~\cite{GW}, in Theorem~5.6.7 and Corollaries~5.7.4 and~5.7.6; see also the treatment in~\cite{EHW}*{Thm.~3.1}, originally due to Schmid~\cite{Schmid}.
The $\k'$-decompositions of $\widetilde{U}^\la$ are given in~\cite{HTW}*{Thm.~3.2}.
\end{proof}

\begin{rem}
\label{rem:stable}
    Experts will observe that the stable range given in Lemma~\ref{lemma:stable} extends slightly farther than the stable range given in~\cite{HTW}*{p.~1609}.
    Specifically, the stable range in Lemma~\ref{lemma:stable} allows $k$ to be one less than the minimum $k$ given in~\cite{HTW}.
    Although not obvious \emph{a priori}, one can verify by examining the resolutions for $\widetilde{U}^\lambda$ constructed in~\cite{EW} that $\widetilde{U}^\lambda$ is a free $\C[\p'_+]$-module in the extended stable range given in our Lemma~\ref{lemma:stable}. 
    One can also verify this extended stable range using results of Schwarz~\cite{Schwarz} on cofree representations: for $\O_k$ see items 1 and 4 in Table 2, for $\GL_k$ see item 2 in Table 1a, and for $\Sp_{2k}$ see items 1 and $1'$ in Table 3 of~\cite{Schwarz}.
\end{rem}

The special case $k=1$ is especially important in this paper.
In the following lemma, since all partitions have length at most 1, we identify each partition $\delta$ (or pair thereof) with a single integer.
In particular, we identify a length-1 partition $(a)$ with the positive integer $a$.
The empty partition is identified with the integer $0$.
For $\GL_1$, we identify the pair $((a), 0)$ with the positive integer $a$, and the pair $(0,(b))$ with the negative integer $-b$.

\begin{lemma}
    \label{lemma:k=1}
    In the case $k=1$, with $\delta \in \widehat{K}$, we have the following decompositions of $\widetilde{U}^\delta$ as $\k'$-modules:

    \begin{center} 
\begin{tblr}{colspec={|Q[m,l]|Q[m,l]|Q[m,l]|Q[m,l]|}, stretch=1.5}

\hline

$K$ & $(\g', \k')$ & $\delta \in \widehat{K}$ & $\widetilde{U}^\delta \textup{ as $\k'$-module}$ \\

\hline[2pt]

$\O_1$ & $(\sp_{2n}, \gl_n)$ & $\delta \in \{0,1\}$ & $\displaystyle\widetilde{E}^{\delta}_{2n} \cong \bigoplus_{\mathclap{\substack{m \in \mathbb{N}:\\ m \equiv \delta \: {\rm mod} \: 2}}} F^{(m)}_n$   \\ 

\hline

$\GL_1$ & $(\gl_{p+q}, \gl_p \oplus \gl_q)$ & $\delta \in \mathbb{Z}$ & $\displaystyle\widetilde{F}^\delta_{p,q} \cong \bigoplus_{\mathclap{\substack{m \in \mathbb{N}: \\ m + \delta \geq 0}}} F^{(m + \delta)}_p \otimes F^{(m)}_q$ \\

\hline

$\Sp_{2}$ & $(\so_{2n}, \gl_n)$ & $\delta \in \mathbb{N}$ & $\displaystyle\widetilde{V}^\delta_{2n} \cong \bigoplus_{m \in \mathbb{N}} F^{(m+\delta,m)}_n$ \\

\hline

\end{tblr}
\end{center}
   
\end{lemma}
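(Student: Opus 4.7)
The plan is to treat all three cases uniformly by using the fact that when $k=1$ every irreducible $K$-module $U^\delta$ is one-dimensional. Under this assumption, the Howe decomposition \eqref{Howe decomp} identifies $\widetilde{U}^\delta$ with the $\delta$-isotypic (equivalently, $K$-character) component of $\C[X]$. Since that component is preserved by the commuting $\k'$-action, the task reduces to computing $\C[X]$ as a $\k'$-module and extracting the piece on which $K$ acts by the character of $U^\delta$.

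For $K = \O_1$, I would first note that $X = \M_{1,n} = \C^n$, so $\C[X]$ is the symmetric algebra on $\C^n$ and thus decomposes as a $\gl_n$-module into $\bigoplus_{m \in \mathbb{N}} S^m(\C^n) = \bigoplus_m F^{(m)}_n$. Since $-1 \in \O_1$ acts by $-1$ on each coordinate, it acts on $S^m(\C^n)$ as $(-1)^m$, which matches the character of $E^\delta_1$ precisely when $m \equiv \delta \pmod{2}$; the $\delta$-isotypic component is then exactly what is claimed. For $K = \GL_1$, I would use $X = \C^p \oplus \C^q$, so that $\C[X] \cong \bigoplus_{a, b \geq 0} F^{(a)}_p \otimes F^{(b)}_q$ as a $\gl_p \oplus \gl_q$-module; the conventions of Theorem~\ref{thm:Howe duality} make the summands $\M_{1,p}$ and $\M_{1,q}$ carry contragredient $\GL_1$-characters, so $F^{(a)}_p \otimes F^{(b)}_q$ has $\GL_1$-weight $a - b$, and collecting terms with $a - b = \delta$ (i.e., $a = m + \delta$, $b = m$ with $m \geq 0$ and $m + \delta \geq 0$) yields the stated formula. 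For $K = \Sp_2$, I would bootstrap from the auxiliary $(\GL_2, \GL_n)$ Howe duality on $\C[\M_{2,n}] \cong \bigoplus_{\ell(\la) \leq 2} F^\la_2 \otimes F^\la_n$. Writing $\la = (a, b)$, I would combine $F^{(a, b)}_2 \cong F^{(a - b)}_2 \otimes \det^b$ with the fact that $\det|_{\Sp_2}$ is trivial (since $\Sp_2 = {\rm SL}_2$) to conclude $F^{(a,b)}_2 \big|_{\Sp_2} \cong V^{(a - b)}_2$. Reindexing by $\delta = a - b$ and $m = b$ and matching against the Howe duality decomposition of Theorem~\ref{thm:Howe duality} then gives the claim.

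The main items to track carefully are the $\GL_1$ sign convention, so that both positive and negative weights $\delta$ appear, and the detail that $\det|_{\Sp_2}$ is trivial in the symplectic case; neither is a genuine obstacle, since the required decompositions of $\C[X]$ are all standard consequences of the auxiliary $(\GL_n, \GL_n)$ and $(\GL_2, \GL_n)$ Howe duality decompositions.
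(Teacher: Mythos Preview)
Your proposal is correct and follows essentially the same route as the paper's proof. In each case the paper likewise identifies $\widetilde{U}^\delta$ with the $\delta$-isotypic component of $\C[X]$ and computes it by first decomposing $\C[X]$ under an auxiliary $\GL$-action (the $\GL_1 \times \GL_n$, $\GL_1 \times \GL_1 \times \GL_p \times \GL_q$, and $\GL_2 \times \GL_n$ dualities, respectively) and then restricting the first factor to $K$; your symmetric-power description in case~(a) and your direct bigraded decomposition in case~(b) are just slightly more elementary phrasings of the same computations, and your case~(c) is identical to the paper's.
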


\begin{proof}
     Recall~\cite{GW}*{Cor.~5.6.8} the following multiplicity-free decomposition of $\C[\M_{p,q}]$ as a representation of $\GL_p \times \GL_q$, acting via $((g,h) \cdot f)(A) = f(g^{T}Ah)$:
     \begin{equation}
     \label{GLp-GLq duality}
        \C[\M_{p,q}] \cong \bigoplus_{\mathclap{\substack{\nu:\\\ell(\nu) \leq \min\{p,q\}}}} F^\nu_p \otimes F^\nu_q.
     \end{equation}

     (a) Taking $p=1$ and $q=n$ in~\eqref{GLp-GLq duality}, we have
     \[
     \C[\M_{1,n}] \cong \bigoplus_{m \in \mathbb{N}} F^{(m)}_1 \otimes F^{(m)}_n
     \]
     as a representation of $\GL_1 \times \GL_n$.
     The action by $\GL_n$ differentiates to the action by $\k' = \gl_n$ in the Howe duality setting for $K = \O_1$ (see the first table in Theorem~\ref{thm:Howe duality}).
     Upon restricting from $\GL_1$ to $\O_1$, each one-dimensional representation $F^{(m)}_1$ is equivalent to $E^{(m \: {\rm mod} \: 2)}_1$.
     We therefore obtain the following decomposition of $\C[\M_{1,n}]$ under the action of $\O_1 \times \gl_n$:
     \begin{align*}
         \C[\M_{1,n}] & \cong \bigoplus_{m \in \mathbb{N}} E^{(m \: {\rm mod} \: 2)}_1 \otimes F^{(m)}_n \\
         & \cong \bigoplus_{\mathclap{\delta = \{0,1\}}} E^{\delta}_1 \otimes \left(\bigoplus_{\substack{m \in \mathbb{N}:\\ m \equiv \delta \: {\rm mod} \: 2}} F^{(m)}_n \right)_{\textstyle{,}}
     \end{align*}
     where the direct sum in parentheses gives the $\k'$-module structure of $\widetilde{E}^\delta_{2n}$.

     (b) Applying~\eqref{GLp-GLq duality} twice, we have
     \begin{align*}
     \C[\M_{1,p} \oplus \M_{1,q}] &\cong \C[\M_{1,p}] \otimes \C[\M_{1,q}] \\
     & \cong \left(\bigoplus_{\ell \in \mathbb{N}} F^{(\ell)}_1 \otimes F^{(\ell)}_p \right) \otimes \left(\bigoplus_{m \in \mathbb{N}} F^{(m)}_1 \otimes F^{(m)}_q \right) \\
     & \cong \bigoplus_{\ell,m \in \mathbb{N}} \left(F^{(\ell)}_1 \otimes F^{(m)}_1 \right) \otimes \left(F^{(\ell)}_p \otimes F^{(m)}_q \right)
     \end{align*}
     as a representation of $(\GL_1 \times \GL_1) \times (\GL_p \times \GL_q)$.
     The action by $\GL_p \times \GL_q$ differentiates to the action by $\k' = \gl_p \oplus \gl_q$ in the Howe duality setting for $K = \GL_1$ (see the first table in Theorem~\ref{thm:Howe duality}).
     Upon restricting from $\GL_1 \times \GL_1$ to the diagonal subgroup $\{(g,g^{-1}) : g \in \GL_1 \} \cong \GL_1$, each one-dimensional representation $F^{(\ell)}_1 \otimes F^{(m)}_1$ is equivalent to $F^{(\ell-m)}_1$.
     (Note that we choose the diagonal embedding $(g, g^{-1})$ due to the action of $\GL_1$ on $\M_{1,p} \oplus \M_{1,q}$ given in Theorem~\ref{thm:Howe duality}: for general $k$ we have $g \cdot (A,B) = (gA, (g^{-1})^{T} B)$, and since $k=1$ the transpose has no effect.)
     We therefore obtain the following decomposition of $\C[\M_{1,p} \oplus \M_{1,q}]$ under the action of $\GL_1 \times (\gl_p \oplus \gl_q)$:
     \begin{align*}
         \C[\M_{1,p} \oplus \M_{1,q}] & \cong \bigoplus_{\ell,m \in \mathbb{N}} F^{(\ell - m)}_1 \otimes \left(F^{(\ell)}_p \otimes F^{(m)}_q \right) \\
         & \cong \bigoplus_{\delta \in \mathbb{Z}} F^\delta_1 \otimes \left( \bigoplus_{\substack{m \in \mathbb{N}: \\ m + \delta \geq 0}} F^{(m + \delta)}_p \otimes F^{(m)}_q \right)_{\textstyle{,}}
     \end{align*}
     where the direct sum in parentheses gives the $\k'$-module structure of $\widetilde{F}^\delta_{p,q}$.

     (c) 
     Taking $p=2$ and $q=n$ in~\eqref{GLp-GLq duality}, we have
     \[
     \C[\M_{2,n}] \cong \bigoplus_{\mathclap{\substack{\ell,m \in \mathbb{N}: \\
     \ell \geq m}}} F^{(\ell,m)}_2 \otimes F^{(\ell,m)}_n
     \]
     as a representation of $\GL_2 \times \GL_n$.
     The action by $\GL_n$ differentiates to the action by $\k' = \gl_n$ in the Howe duality setting for $K = \Sp_2$ (see the first table in Theorem~\ref{thm:Howe duality}).
     Upon restricting from $\GL_2$ to $\Sp_2 \cong {\rm SL}_2$, each representation $F^{(\ell,m)}_2$ is equivalent to $V^{(\ell-m)}_2$.
     We therefore obtain the following decomposition of $\C[\M_{2,n}]$ under the action of $\Sp_2 \times \gl_n$:
     \begin{align*}
         \C[\M_{2,n}] & \cong \bigoplus_{\ell \geq m} V^{(\ell-m)}_2 \otimes F^{(\ell,m)}_n \\
         & \cong \bigoplus_{\delta \in \mathbb{N}} V^\delta_2 \otimes \left( \bigoplus_{m \in \mathbb{N}} F^{(m+\delta,m)}_n \right)_{\textstyle{,}}
     \end{align*}
     where the direct sum in parentheses gives the $\k'$-module structure of $\widetilde{V}^\delta_{2n}$.
\end{proof}

\begin{rem}
    In the case $K = \O_1$ in Lemma~\ref{lemma:k=1}, the representation $\C[X] = \C[\M_{1,n}] \cong \C[x_1, \ldots, x_n]$ is the underlying Harish-Chandra module of the \emph{oscillator representation} of the metaplectic group, i.e., the double-cover of $G' = \Sp(2n,\R)$.
    For $\widehat{K}= \{0,1\}$, the empty partition $0$ labels the trivial representation $\bm{1} \coloneqq E^0_1$ of $\O_1$, while $1$ labels the sign representation $\mathbf{sgn} \coloneqq E^1_1$.
    It follows from Theorem~\ref{thm:Howe duality} that $\C[x_1, \ldots, x_n]$ decomposes into two components under the action of $\O_1 \times \sp_{2n}$:
\[
\C[x_1, \ldots, x_n] \cong \left(\bm{1} \otimes \widetilde{E}^0_{2n}\right) \oplus \left(\mathbf{sgn} \otimes \widetilde{E}^1_{2n}\right).
\]
    These two components consist of the polynomials of even and odd degree, respectively.
\end{rem}

\subsection*{Direct sum embeddings}

Let $k_1, \ldots, k_r$ be positive integers summing to $k$.
Inside each classical group $K$, one can naturally embed a direct sum $M$ of smaller classical groups (of the same type) as a subgroup, via block diagonal matrices; see the table~\eqref{G info}.
In each case, upon restricting from $K$ to $M$, the matrix space $X$ decomposes into a direct sum of vertically stacked blocks as follows:
\begingroup
\renewcommand*{\arraystretch}{1.5}
\begin{equation}
\label{W block decomp}
    \begin{array}{l|l|l}
        M & X & \C[X]\\ \hline
        \prod_{i=1}^r \O_{k_i} \quad\quad & \bigoplus_{i=1}^r \M_{k_i,n} & \bigotimes_{i=1}^r \C[\M_{k_i,n}] \\
        \prod_{i=1}^r \GL_{k_i} \quad\quad & \bigoplus_{i=1}^r (\M_{k_i,p} \oplus \M_{k_i,q}) & \bigotimes_{i=1}^r \C[\M_{k_i,p} \oplus \M_{k_i,q}]\\
        \prod_{i=1}^r \Sp_{2k_i} \quad\quad & \bigoplus_{i=1}^r \M_{2k_i,n} & \bigotimes_{i=1}^r \C[\M_{2k_i,n}]
    \end{array}
\end{equation}
\endgroup
In the action of $M$ on $\C[X]$ in~\eqref{W block decomp}, each $\O_{k_i}$ (resp., $\GL_{k_i}$ or $\Sp_{2k_i}$) acts on the tensor factor $\C[\M_{k_i,n}]$ (resp., $\C[\M_{k_i,p} \oplus \M_{k_i,q}]$ or $\C[\M_{2k_i, n}]$) exactly as in Theorem~\ref{thm:Howe duality}, and acts trivially on every other tensor factor.
Therefore, by replacing $k$ by $k_i$ (in the table in Theorem~\ref{thm:Howe duality}) for each tensor factor in~\eqref{W block decomp}, we have the following multiplicity-free decomposition of $\C[X]$ as a module for $M \times (\g')^{\oplus r}$:
\begin{equation}
    \label{LC decomp}
    \C[X] \cong \bigoplus_{\substack{\bmu = (\mu_1, \ldots, \mu_r):\\\mu_i \in \Sigma_i}} \underbrace{\left(\bigotimes_{i=1}^r U^{\mu_i}\right)}_{W^{\bmu}} \otimes \underbrace{\left( \bigotimes_{i=1}^r \widetilde{U}^{\mu_i}\right)}_{\widetilde{W}^{\bmu}} \raisebox{-2ex}{,}
\end{equation}
where $\Sigma_i$ denotes the set $\Sigma$ in Theorem~\ref{thm:Howe duality} upon replacing $k$ by $k_i$.
Whereas we used the letter $U$ to denote an irreducible representation of $K$ (decorated with a tilde to denote the corresponding infinite-dimensional $\g'$-module), we use $W$ to denote an irreducible representation of the subgroup $M$ (decorated with a tilde to denote the corresponding infinite-dimensional $(\g')^{\oplus r}$-module).

\subsection*{Seesaw reciprocity}

Compare the two multiplicity-free decompositions of $\C[X]$ given in~\eqref{Howe decomp} and~\eqref{LC decomp}:
\begin{align*}
    \C[X] &\cong \bigoplus_{\la \in \Sigma} U^\la \otimes \widetilde{U}^\la && \text{as a module for $K \times \g'$}\\
    \C[X] & \cong \bigoplus_{\mathclap{\substack{\bmu \\ \in \Sigma_1 \times \cdots \times \Sigma_r}}} W^{\bmu} \otimes \widetilde{W}^{\bmu} && \text{as a module for $M \times (\g')^{\oplus r}$}.
\end{align*}
Following Kudla~\cite{Kudla}, we say that $K \times \g'$ and $M \times (\g')^{\oplus r}$ form a \emph{seesaw pair} on the space $\C[X]$, which we display as
\begin{equation}
\label{seesaw paradigm}
\begin{array}{ccl}
    K & \times & \g'  \\[1ex]
    \rotatebox[origin=c]{90}{$\subset$} & & \rotatebox[origin=c]{90}{$\supset$} \\[1ex]
    M & \times & (\g')^{\oplus r}.
\end{array}
\end{equation}
Because the $K$- and $M$-actions are the complexifications of actions by compact groups $K_\R$ and $M_\R$, both actions on the left-hand side of the seesaw pair~\eqref{seesaw paradigm} are completely reducible, so the actions of the Lie algebras on the right-hand side are completely reducible as well (see \cites{Howe89, HK}).
The term ``seesaw" describes the reciprocity of branching multiplicities
\begin{equation}
\label{eq:seesaw}
     b^\la_{\bmu} = \left[ W^{\bmu} : U^\la \right] = \left[ \widetilde U^\la : \widetilde {W}^{\bmu} \right]_{\textstyle{.}}
\end{equation}
Thus, the restriction problem (for finite-dimensional modules) from $K$ to $M$ is equivalent to a restriction problem (for infinite-dimensional modules) from $(\g')^{\oplus r}$ to $\g'$.

\section{Stable branching rule from $K$ to $M$}
\label{sec:stable}

Our main result in this section, namely Theorem~\ref{thm:main result}, is a detailed statement of Theorem~\ref{theorem 1 in intro}.
In it, we express the branching multiplicity $b^\la_{\bmu} \coloneqq [W^{\bmu} : U^\la]$ as a sum of generalized Littlewood--Richardson coefficients.
This rule is valid as long as the parameters $\lambda$ and $\bmu$ lie inside a certain stable range, given explicitly in Theorem~\ref{thm:main result} below.
Roughly speaking, the maximum valid length of $\la$ and of each $\mu_i$ depends on the smallest parameter $k_i$.
The stability assumption allows us to prove the following theorem using nothing more than seesaw reciprocity along with the $\k'$-decompositions from Lemma~\ref{lemma:stable}.

\begin{thm}[cf.~Theorem~\ref{theorem 1 in intro}]
\label{thm:main result}

Let $M \subset K$ be a direct sum embedding in a classical group, as in~\eqref{G info}, where $k_1, \ldots, k_r$ are positive integers summing to $k$.
Let $b^\la_{\bmu} \coloneqq [W^{\bmu} : U^\la]$ denote the branching multiplicity from $K$ to $M$.
Recall the generalized Littlewood--Richardson coefficients $c^\la_{\bmu \bnu}$ defined in~\eqref{LRC definition munu}.

\begin{enumerate}[label=\textup{(\alph*)}]
    
\item \textup{Stable branching rule from $K = \O_k$ to $M = \O_{k_1} \times \cdots \times \O_{k_r}$:}

Let $n$ be a positive integer such that $n \leq \frac{1}{2}(1+ \min_i\{k_i\})$.
Let $\la$ be a partition such that $\ell(\la) \leq n$.
Let $\bmu = (\mu_1, \ldots, \mu_r)$ be a vector of partitions, such that each $\ell(\mu_i) \leq n$.
Then we have
\[
    b_{\bmu}^\la = \sum_{\bnu} \: \LRC{\la}{\bmu\bnu},
\]
where $\bnu = (\nu_1, \ldots, \nu_{r-1})$ is a vector in which each partition $\nu_j$ has even rows, with $\ell(\nu_j) \leq n$.

\item \textup{Stable branching rule from $K = \GL_k$ to $M = \GL_{k_1} \times \cdots \times \GL_{k_r}$:}
    
     Let $p$ and $q$ be positive integers such that $p+q \leq 1+\min_i\{k_i\}$.
     Let $\la = (\la^+, \la^-)$ be such that $\ell(\la^+) \leq p$ and $\ell(\la^-) \leq q$.
     Let $\bmu = (\mu_1, \ldots, \mu_r) =((\mu^+_1, \mu^-_1), \ldots, (\mu^+_r, \mu^-_r))$ be such that each $\ell(\mu^+_i) \leq p$ and $\ell(\mu^-_i) \leq q$.
     Then we have
     \[
     b_{\bmu}^\la = \sum_{\bnu} \LRC{\la^+}{\bmu^+\bnu} \:\LRC{\la^-}{\bmu^-\bnu},
\]
where the sum ranges over all vectors $\bnu=(\nu_1,\dots,\nu_{r-1})$ such that each $\ell(\nu_j)\leq \min\{p,q\}$.

\item \textup{Stable branching rule from $K = \Sp_{2k}$ to $M = \Sp_{2k_1} \times \cdots \times \Sp_{2k_r}$:}

Let $n$ be a positive integer such that $n \leq 1 + \min_i\{k_i\}$.
Let $\la$ be a partition with $\ell(\la) \leq n$.
Let $\bmu = (\mu_1, \ldots, \mu_r)$ be a vector of partitions, such that each $\ell(\mu_i) \leq n$.
Then we have
\[
    b_{\bmu}^\la = \sum_{\bnu} \LRC{\la}{\bmu\bnu},
\]
where $\bnu = (\nu_1, \ldots, \nu_{r-1})$ is a vector in which each partition $\nu_j$ has even columns, with $\ell(\nu_j) \leq n$.

\end{enumerate}
       
\end{thm}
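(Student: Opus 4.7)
The plan is to use seesaw reciprocity to convert the finite-dimensional branching problem into a question about $\g'$-module decompositions, and then settle that question by comparing $\k'$-characters in the stable range supplied by Lemma~\ref{lemma:stable}. The hypothesis on $\la$ and on each $\mu_i$ is exactly what is needed for Lemma~\ref{lemma:stable} to apply simultaneously to $\widetilde U^\la$ and to every tensor factor $\widetilde U^{\mu_i}$, giving the $\k'$-module isomorphisms
$$\widetilde U^\la \cong \C[\p'_+] \otimes L^{\xi_\la} \qquad\text{and}\qquad \widetilde U^{\mu_i} \cong \C[\p'_+] \otimes L^{\xi_{\mu_i}},$$
where $L^{\xi_\nu}$ denotes the highest $\k'$-type of $\widetilde U^\nu$ listed in Table~\ref{table:gkp}. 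Crucially, the polynomial ring $\C[\p'_+]$ depends only on $(\g',\k')$, not on the rank parameters $k$ or $k_i$, so the same factor appears on both sides.

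Starting from $b^\la_{\bmu} = [\widetilde U^\la : \widetilde W^{\bmu}]$ in~\eqref{eq:seesaw}, I would view $\widetilde W^{\bmu} = \bigotimes_{i=1}^r \widetilde U^{\mu_i}$ as a $\g'$-module via the diagonal embedding $\g' \hookrightarrow (\g')^{\oplus r}$. Complete reducibility, already recorded via the compact picture of the seesaw pair, yields $\widetilde W^{\bmu} \cong \bigoplus_\nu b^\nu_{\bmu}\, \widetilde U^\nu$ as $\g'$-modules. Taking $\k'$-characters on both sides and inserting the factorizations above produces
$$\chi(\C[\p'_+])^{\,r} \cdot \prod_{i=1}^r \chi(L^{\xi_{\mu_i}}) \;=\; \chi(\C[\p'_+]) \cdot \sum_\nu b^\nu_{\bmu}\, \chi(L^{\xi_\nu}).$$
Since $\chi(\C[\p'_+])$ has constant term $1$ and is therefore a unit in the ring of formal power series on the maximal torus of $\k'$, I cancel one factor and read off, for each $\nu$,
$$b^\nu_{\bmu} \;=\; \left[L^{\xi_\nu} \;:\; \C[\p'_+]^{\otimes(r-1)} \otimes \bigotimes_{i=1}^r L^{\xi_{\mu_i}}\right]_{\k'}.$$

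To finish, I would substitute the explicit $\k'$-decomposition of $\C[\p'_+]$ from Table~\ref{table:gkp} in each of the three settings. For $\O_k$ and $\Sp_{2k}$, where $\k' = \gl_n$ and $L^{\xi_\mu} = F^\mu_n$, the ring $\C[\p'_+]$ is a direct sum of $F^\nu_n$ over partitions $\nu$ with even rows or even columns, respectively. Distributing the $(r-1)$-fold tensor product expresses $b^\la_{\bmu}$ as a sum of the classical $\gl_n$-multiplicities $\LRC{\la}{\bmu\bnu}$ indexed by $(r-1)$-tuples $\bnu$ of such partitions, which is parts~(a) and~(c). For $\GL_k$, the summand $F^\nu_p \otimes F^\nu_q$ of $\C[\M_{p,q}]$ couples the two factors $\gl_p$ and $\gl_q$ of $\k' = \gl_p \oplus \gl_q$, but since these act on commuting tensor slots, the multiplicity of $F^{\la^+}_p \otimes F^{\la^-}_q$ factors as a product of $\gl_p$- and $\gl_q$-multiplicities, yielding $\LRC{\la^+}{\bmu^+\bnu}\, \LRC{\la^-}{\bmu^-\bnu}$ as in part~(b).

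The one point that requires genuine care is the cancellation step: dividing the character identity by $\chi(\C[\p'_+])$ is legitimate precisely because Lemma~\ref{lemma:stable} asserts the honest $\k'$-module isomorphism $\widetilde U^\nu \cong \C[\p'_+] \otimes L^{\xi_\nu}$ (i.e., freeness of $\widetilde U^\nu$ over $\C[\p'_+]$), not merely a matching of characters modulo lower-order terms. Outside the stable range this freeness breaks down: additional $\widetilde U^\nu$ contribute in ways that are invisible from top $\k'$-types alone, and the clean Littlewood--Richardson formulas fail. Thus the heart of the argument is the structural input of Lemma~\ref{lemma:stable} together with Remark~\ref{rem:stable}, which permits the purely character-theoretic calculation sketched above.
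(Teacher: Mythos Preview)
Your proposal is correct and follows essentially the same route as the paper: seesaw reciprocity to pass to the $\g'$-side, then Lemma~\ref{lemma:stable} to factor each $\widetilde U^{\mu_i}$ and $\widetilde U^\la$ as $\C[\p'_+]$ tensor a simple $\k'$-module, followed by cancellation of one $\C[\p'_+]$ factor and substitution of the Table~\ref{table:gkp} decomposition of $\C[\p'_+]^{\otimes(r-1)}$. The paper performs the cancellation implicitly by writing the right-hand $\k'$-module as $\bigoplus_\gamma(\sum_{\bnu}\LRC{\gamma}{\bmu\bnu})(\C[\p'_+]\otimes F^\gamma_n)$ and matching against $\C[\p'_+]\otimes F^\la_n$, whereas you make the same step explicit at the level of formal characters; the content is identical.
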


\begin{proof}\

(a) By seesaw reciprocity~\eqref{eq:seesaw}, we have
\begin{equation}
\label{b Ok proof}
    b^\la_{\bmu} = \left[ \widetilde{E}^\la_{2n} : \bigotimes_{i=1}^r \widetilde{E}^{\mu_i}_{2n} \right]_{\textstyle{,}}
\end{equation}
where the modules on the right-hand side are for $\g' = \sp_{2n}$.
The assumption on $n$ implies that each of the parameters $k_i$, as well as $k$, are in the stable range (see the table in Lemma~\ref{lemma:stable}).
Further, the assumptions on $\ell(\la)$ and $\ell(\mu_i)$ guarantee that the desired representations occur in the Howe duality setting of Theorem~\ref{thm:Howe duality}.
It follows from Lemma~\ref{lemma:stable} that as $\k' = \gl_n$-modules,
\begin{align*}
    \widetilde{E}^{\mu_i}_{2n} & \cong \C[{\rm SM}_n] \otimes F^{\mu_i}_n, \qquad 1 \leq i \leq r,\\
    \widetilde{E}^\la_{2n} & \cong \C[{\rm SM}_n] \otimes F^\la_n.
\end{align*}
Substituting these into~\eqref{b Ok proof}, we have
\begin{equation}
    \label{subbed b Ok proof}
    b^\la_{\bmu} = \left[ \C[{\rm SM}_n] \otimes F^\la_n : \bigotimes_{i=1}^r \C[{\rm SM}_n] \otimes F^{\mu_i}_n \right]_{\textstyle{.}}
\end{equation}
By Table~\ref{table:gkp}, the $\k'$-module $\bigotimes_{i=1}^r \C[{\rm SM}_n] \otimes F^{\mu_i}_n$ becomes
\begin{align*}
    & \phantom{=}  \left(\bigotimes_{j=1}^{r-1} \C[{\rm SM}_n] \right) \otimes \C[{\rm SM}_n] \otimes \bigotimes_{i=1}^r F^{\mu_i}_n\\
    &\cong   \bigotimes_{j=1}^{r-1} \left( \bigoplus_{\nu_j} F^{\nu_j}_n \right) \otimes \C[{\rm SM}_n] \otimes \bigotimes_{i=1}^r F^{\mu_i}_n & \text{each $\nu_j$ with even rows, and $\ell(\nu_j) \leq n$}\\
    & \cong  \bigoplus_{\bm{\nu}} \C[{\rm SM}_n] \otimes F^{\mu_1}_n \otimes \cdots \otimes F^{\mu_r}_n \otimes F^{\nu_1}_n \otimes \cdots \otimes F^{\nu_{r-1}} & \text{summing over all $\bm{\nu} = (\nu_1, \ldots, \nu_{r-1})$} \\
    & \cong \bigoplus_{\bm{\nu}} \C[{\rm SM}_n] \otimes 
 \bigoplus_\ga \LRC{\ga}{\bmu \bm{\nu}}  F^\ga_n & \text{by~\eqref{LRC definition munu}} \\
    & \cong \bigoplus_\ga \bigoplus_{\bm{\nu}} \LRC{\ga}{\bmu \bm{\nu}}  \left( \C[{\rm SM}_n] \otimes F^\ga_n \right).
\end{align*}
Upon substituting this into~\eqref{subbed b Ok proof}, it is clear that $b^\la_{\bmu} = \sum_{\bm{\nu}} \LRC{\la}{\bmu \bm{\nu}}$, where the sum ranges over those vectors $\bm{\nu}$ satisfying the conditions stated in the theorem.
This completes the proof of part (a).

(b) The argument is the same as in part (a), \emph{mutatis mutandis}.
In this case, by seesaw reciprocity~\eqref{eq:seesaw} we have 
\begin{equation}
    \label{reciprocity GL}
    b_{\bmu}^\la =\left[\widetilde{F}^\la_{p,q}:\bigotimes_{i=1}^r \widetilde{F}^{\mu_i}_{p,q}\right]_{\textstyle{.}}
\end{equation}
The assumptions on $p$ and $q$ imply that each of the parameters $k_i$, as well as $k$, are in the stable range (see the table in Lemma~\ref{lemma:stable}).
Further, the assumptions on $\ell(\la^\pm)$ and $\ell(\mu^\pm_i)$ guarantee that the desired representations occur in the Howe duality setting of Theorem~\ref{thm:Howe duality}.
It follows from Lemma~\ref{lemma:stable} that as $\k' = \gl_p \times \gl_q$-modules,
\begin{align*}
    \widetilde{F}^{\mu_i}_{p,q} &\cong \C[\M_{p,q}] \otimes F^{\mu^+_i}_p \otimes F^{\mu^-_i}_q, \qquad 1 \leq i \leq r,\\
    \widetilde{F}^{\la}_{p,q} &\cong \C[\M_{p,q}] \otimes F^{\la^+}_p \otimes F^{\la^-}_q
\end{align*}
Substituting these expressions into~\eqref{reciprocity GL}, we obtain
\begin{equation}
\label{subbed b GLk proof}
    b_{\bmu}^\la =  \left[\C[\M_{p,q}]\otimes \left(F^{\la^+}_{p}\otimes F^{\la^-}_{q} \right) : \bigotimes_{i=1}^r \C[\M_{p,q}]\otimes \left(F^{\mu_i^+}_{p}\otimes F^{\mu_i^-}_{q}\right)\right]_{\textstyle{.}}
\end{equation}
By Table~\ref{table:gkp}, the $\k'$-module $\bigotimes_{i=1}^r \C[\M_{p,q}]\otimes \left(F^{\mu_i^+}_{p}\otimes F^{\mu_i^-}_{q}\right)$ becomes
\begin{align*}
    & \phantom{=} \left(\bigotimes_{j=1}^{r-1} \C[\M_{p,q}] \right) \otimes \C[\M_{p,q}] \otimes \left(\bigotimes_{i=1}^r F^{\mu^+_i}_p \otimes F^{\mu^-_i}_q \right) \\  
    & \cong \bigotimes_{j=1}^{r-1} \left( \bigoplus_{\nu_j} F^{\nu_j}_p \otimes F^{\nu_j}_q \right) \otimes \C[\M_{p,q}] \otimes \left(\bigotimes_{i=1}^r F^{\mu^+_i}_p \otimes F^{\mu^-_i}_q \right) \text{ where each $\ell(\nu_j) \leq \min\{p,q\}$}\\[1ex]
    & \cong \left(\bigoplus_{\bnu} \left(\bigotimes_{j=1}^{r-1} F^{\nu_j}_p \otimes \bigotimes_{j=1}^{r-1} F^{\nu_j}_q \right)\right) \otimes \C[\M_{p,q}] \otimes \left( \bigotimes_{i=1}^r F^{\mu^+_i}_p \right) \otimes \left( \bigotimes_{i=1}^r F^{\mu^-_i}_q \right) \\[1ex]
    & \cong \bigoplus_{\bnu} \C[\M_{p,q}] \otimes  \left( \bigotimes_{i=1}^r F^{\mu^+_i}_p \otimes \bigotimes_{j=1}^{r-1} F^{\nu_j}_p \right) \otimes \left( \bigotimes_{i=1}^r F^{\mu^-_i}_q \otimes \bigotimes_{j=1}^{r-1} F^{\nu_j}_q \right) \\[1ex]
    & \cong \bigoplus_{\bnu} \C[\M_{p,q}] \otimes \left( \bigoplus_{\ga^+} \LRC{\ga^+}{\bmu^+ \bnu} \left(F^{\ga^+}_p\right)\right) \otimes \left( \bigoplus_{\ga^-} \LRC{\ga^-}{\bmu^- \bnu} \left(F^{\ga^-}_q\right)\right) \nonumber\\[1ex]
    & \cong \bigoplus_{\substack{\ga = \\ (\ga^+, \ga^-)}} \bigoplus_{\bnu} \LRC{\ga^+}{\bmu^+ \bnu} \: \LRC{\ga^-}{\bmu^- \bnu} \left(\C[\M_{p,q}] \otimes F^{\ga^+}_p \otimes F^{\ga^-}_q \right)_{\textstyle{.}}
\end{align*}
Upon substituting this into~\eqref{subbed b GLk proof}, it is clear that $b_{\bmu}^\la = \sum_{\bnu} \LRC{\la^+}{\bmu^+ \bnu} \LRC{\la^-}{\bmu^- \bnu}$, where $\bm{\nu}$ satisfies the conditions stated in the theorem.
This completes the proof of part (b).

(c) The proof is identical to part (a), this time using the $K = \Sp_{2k}$ case in Theorem~\ref{thm:Howe duality}, Lemma~\ref{lemma:stable}, and Table~\ref{table:gkp}.
\end{proof}

\section{Branching rule from $K$ to the minimal $M$}

The main result in this section, namely Theorem~\ref{thm:K to M}, is a detailed statement of Theorem~\ref{theorem 2 in intro}.
This result is a tableau-theoretic branching rule from $K$ to its minimal direct sum embedding $M$.
Unlike Theorem~\ref{thm:main result}, this branching rule is fully general, valid for all irreducible finite-dimensional rational representations of $K$ and $M$.

Throughout this section, $M$ is given by the special case in~\eqref{G info} where $k_1 = \cdots = k_r = 1$.
The irreducible representations $W^{\bm{\delta}}$ of $M$ are labeled by the following $k$-tuples~$\bm{\delta}$:
\begingroup
\renewcommand*{\arraystretch}{1.5}
\begin{equation}
\label{M hat}
    \begin{array}{l|l|l}
        K & M & \bm{\delta} \in \widehat{M} \\ \hline

        \O_k & (\O_1)^k & \bm{\delta} \in \{0,1\}^k\\
        \GL_k & (\GL_1)^k & \bm{\delta} \in \mathbb{Z}^k\\
        \Sp_{2k} & (\Sp_2)^k & \bm{\delta} \in \mathbb{N}^k     
    \end{array}
\end{equation}
\endgroup
We now use $\bm{\delta}$ rather than $\bm{\mu}$, since the integers $\delta_i$ will be viewed as differences between frequencies of entries in certain tableaux.
Let $\SSYT(k)$ denote the set of semistandard tableaux whose entries are taken from the ordered alphabet $1 < \cdots < k$.
Likewise, let $\SSYT(1, \bar{1}, \ldots, k, \bar{k})$ denote the set of semistandard tableaux whose entries are taken from the ordered alphabet $1 < \bar{1} < \cdots < k < \bar{k}$.

\begin{dfn}[$K$-tableaux and their weights]\
\label{def:KC-tableaux}
    
    \begin{enumerate}[label=(\alph*)]
        \item A tableau $T \in \SSYT(k)$ is an \emph{$\O_k$-tableau} if, for all $1 \leq i \leq k$, we have
        \[
        \#\{ \text{boxes in first two columns of $T$ with entry $\leq i$}\} \leq i.
        \]
        We define $\mathbf{wt}(T)$ to be the vector $(w_1, \ldots, w_k) \in \widehat{M} = \{0,1\}^k$ such that 
        \[
        w_i = \#\{\text{boxes in $T$ with entry $i$}\} \: {\rm mod} \: 2.
        \]
        
        \item A pair $(T^+, T^-) \in \SSYT(k) \times \SSYT(k)$ is a \emph{$\GL_k$-tableau} if, for all $1 \leq i \leq k$, we have
        \[
        \#\{\text{boxes in first column of $T^+$ or $T^-$ with entry $\leq i$}\} \leq i.
        \]
        We define $\mathbf{wt}(T^+, T^-)$ to be the vector $(w_1, \ldots, w_k) \in \widehat{M} = \mathbb{Z}^k$ such that
        \[
        w_i = \#\{\text{boxes in $T^+$ with entry $i$}\} - \#\{\text{boxes in $T^-$ with entry $i$}\}.
        \]

        \item A tableau $T \in \SSYT(1, \bar{1}, \ldots, k, \bar{k})$ is an \emph{$\Sp_{2k}$-tableau} if, for all $1 \leq i \leq k$, we have
        \[
        \#\{\text{boxes in first column of $T$ with entry $\leq \bar{\imath}$}\} \leq i.
        \]
        Moreover, we say that $T$ is an \emph{$\Sp_{2k}$-ballot tableau} if, as one reads the word of $T$, the number of $\bar{\imath}$'s never exceeds the number of $i$'s, for each $1 \leq i \leq k$.
        
        If $T$ is an $\Sp_{2k}$-ballot tableau, then we define $\mathbf{wt}(T)$ to be the vector $(w_1, \ldots, w_k) \in \widehat{M} = \mathbb{N}^k$ such that
        \[
        w_i = \#\{\text{boxes in $T$ with entry $i$}\} - \#\{\text{boxes in $T$ with entry $\bar{\imath}$}\}.
        \]
    \end{enumerate}
\end{dfn}

Note that Definition~\ref{def:KC-tableaux} can be restated as follows: $T$ is a $K$-tableau if and only if, upon restricting $T$ to those entries $\leq i$ (ignoring the overlines for $\Sp_{2k}$), the resulting shape lies in the set $\widehat{\O}_i$ or $\widehat{\GL}_i$ or $\widehat{\Sp}_{2i}$.
We observe that an $\Sp_{2k}$-ballot tableau cannot contain any entries $\bar{\imath}$ in its first row, and therefore cannot contain the entry $\bar{1}$ at all.

For $\la \in \widehat{K}$ and $\bm{\delta} \in \widehat{M}$, we define the following sets of $K$-tableaux refined by their shape and weight:
\begin{align}
    \label{T(KC)}
    \begin{split}
        \mathcal{T}(\O_k)^\la_{\bm{\delta}} &\coloneqq \{\text{$\O_k$-tableaux $T$ of shape $\la$, such that $\mathbf{wt}(T) = \bm{\delta}$}\},\\
        \mathcal{T}(\GL_k)^\la_{\bm{\delta}} &\coloneqq \{ \text{$\GL_k$-tableaux $(T^+, T^-)$ of shape $(\la^+, \la^-)$, such that $\mathbf{wt}(T^+,T^-) = \bm{\delta}$}\},\\
        \mathcal{T}(\Sp_{2k})^\la_{\bm{\delta}} &\coloneqq \{ \text{$\Sp_{2k}$-ballot tableaux $T$ of shape $\la$, such that $\mathbf{wt}(T) = \bm{\delta}$}\}.
    \end{split}
\end{align}

\begin{thm}[cf.~Theorem~\ref{theorem 2 in intro}]
    \label{thm:K to M}

    Let $M \subseteq K$ as in~\eqref{M hat}.
    Let $\la \in \widehat{K}$ and $\bm{\delta} = (\delta_1, \ldots, \delta_k) \in \widehat{M}$.
    Let $b^\la_{\bm{\delta}} \coloneqq [W^{\bm{\delta}} : U^\la]$ denote the branching multiplicity from $K$ to $M$.
    We have
\[
    b^\la_{\bm{\delta}} = \# \mathcal{T}(K)^\la_{\bm{\delta}}.
\]

\end{thm}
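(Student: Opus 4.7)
The plan is to proceed by induction on $k$, deriving a one-step branching rule from $K_k$ to $K_{k-1}\times K_1$ via seesaw reciprocity together with the rank-one $\k'$-decompositions of Lemma~\ref{lemma:k=1}, and then assembling the inductive step combinatorially. Here $K_{k-1}$ denotes the rank-$(k-1)$ classical group of the same Lie type as $K=K_k$. The base case $k=1$ is immediate: $\mathcal{T}(K_1)^\la_\delta$ is a singleton when $\la=\delta$ and empty otherwise, matching $b^\la_\delta=\delta_{\la,\delta}$.

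For the inductive step, first derive the one-step rule. Applying seesaw reciprocity~\eqref{eq:seesaw} with $r=2$ to the inclusion $K_{k-1}\times K_1\subset K$ gives
\[
[U^\mu_{K_{k-1}}\otimes U^\delta_{K_1}:U^\la_K]
=\bigl[\widetilde U^\mu\otimes\widetilde U^\delta:\widetilde U^\la\bigr]_{\g'}
\]
for any $\mu\in\widehat{K_{k-1}}$ and $\delta\in\widehat{K_1}$. Since Lemma~\ref{lemma:k=1} writes $\widetilde U^\delta$ explicitly as an infinite sum of one-row or two-row $\k'$-irreducibles (while $\widetilde U^\mu$ has the $\k'$-structure given by Lemma~\ref{lemma:stable} once $n$ is chosen in the stable range), the $\k'$-decomposition of the tensor product $\widetilde U^\mu\otimes\widetilde U^\delta$ is computable by the Pieri rule~\eqref{Pieri} in the orthogonal and linear settings, and by~\eqref{LR double} in the symplectic setting. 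The $\g'$-multiplicity is then extracted by identifying $\g'$-highest weight vectors as the $\k'$-highest weight vectors annihilated by $\p'_+$. The resulting one-step rules uniformly recover those of King~\cite{King75} and Proctor~\cite{Proctor}: $\la/\mu$ must be a horizontal strip of controlled size or parity (for $\O_k$ and $\GL_k$) or a double strip (for $\Sp_{2k}$).

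The inductive step then follows from transitivity of restriction through $K_{k-1}\times K_1\subset K$:
\[
b^\la_{\bm\delta}=\sum_\mu b^\mu_{(\delta_1,\dots,\delta_{k-1})}\cdot\bigl[U^\mu_{K_{k-1}}\otimes U^{\delta_k}_{K_1}:U^\la_K\bigr].
\]
The induction hypothesis replaces the first factor with $\#\mathcal{T}(K_{k-1})^\mu_{(\delta_1,\dots,\delta_{k-1})}$, while the one-step rule replaces the second factor with the number of legal ways to append boxes labeled $k$ (and, in the symplectic case, $\bar{k}$) so that the result is a $K_k$-tableau of shape $\la$ and weight $\bm\delta$. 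Summing over $\mu$ then partitions $\mathcal{T}(K_k)^\la_{\bm\delta}$ according to the sub-tableau on entries strictly less than $k$, finishing the induction.

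The hardest part will be the symplectic case, where the branching $\Sp_{2k}\to\Sp_{2(k-1)}\times\Sp_2$ is not multiplicity-free: a one-step increment $\la/\mu$ is a double strip in which columns of height two correspond to the simultaneous appearance of $k$ and $\bar k$ in the same column, and the ballot condition in Definition~\ref{def:KC-tableaux}(c) must be tracked inductively, arising from the dominance inequality $m+\delta\geq m$ governing the $\gl_n$-types $F^{(m+\delta,m)}_n$ in $\widetilde V^\delta_{2n}$. The orthogonal case is also delicate, since $\O_k$ is disconnected and the associated partition~\eqref{tensor det} relates each $E^\la_k$ to $E^{\overline{\la}}_k$; reconciling this symmetry with the first-two-columns constraint in Definition~\ref{def:KC-tableaux}(a) is the subtlest part of the orthogonal combinatorics.
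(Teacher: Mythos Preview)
Your proposal is correct and follows essentially the same route as the paper: derive the one-step branching rule $K_k \downarrow K_{k-1}\times K_1$ via seesaw reciprocity together with Lemmas~\ref{lemma:stable} and~\ref{lemma:k=1}, then iterate (equivalently, induct on $k$) to reach the minimal $M$, matching the resulting chains to $K$-tableaux. The paper phrases the final assembly as an explicit bijection between chains $\mu_0\subseteq\cdots\subseteq\mu_k$ (or, for $\Sp_{2k}$, $k$-tuples of LR skew tableaux) and $\mathcal{T}(K)^\la_{\bm\delta}$, while you phrase it as an inductive step partitioning $\mathcal{T}(K_k)^\la_{\bm\delta}$ by the sub-tableau on entries $<k$; these are equivalent. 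One small difference in emphasis: where you speak of extracting the $\g'$-multiplicity by finding $\k'$-highest weight vectors annihilated by $\p'_+$, the paper instead uses that in the stable range both $\widetilde U^\la$ and $\widetilde U^\mu\otimes\widetilde U^\delta$ are of the form $\C[\p'_+]\otimes(\cdot)$ as $\k'$-modules and simply cancels the common factor---this avoids any explicit highest-weight-vector argument. Your identification of the two delicate points (the associated-partition trick for $\O_k$ when $\ell(\la)>k/2$, handled in the paper via Lemma~\ref{lemma:Ok assoc rule}, and the ballot condition for $\Sp_{2k}$ arising from the LR rule for two-row shapes) is exactly right.
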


Before proving Theorem~\ref{thm:K to M}, we present the following examples.

\begin{ex}
    Let $K = \O_5$, and let $\la = (2,2)$ and $\bm{\delta} = \mathbf{0}$.
    By Theorem~\ref{thm:K to M} and~\eqref{T(KC)}, the branching multiplicity $b^{(2,2)}_{\mathbf{0}}$ equals the number of $\O_5$-tableaux of shape $(2,2)$ and weight $\mathbf{0}$.
    Each such semistandard tableau $T$ is filled with entries from $\{1, \ldots, 5\}$, such that there are no more than $i$ entries less than or equal to $i$ in the first two columns, and every entry occurs an even number of times.
    There are five such tableaux:
    \[
    \ytableausetup{centertableaux, boxsize=1.3em}
    b^{(2,2)}_{\mathbf{0}} = \#
    \mathcal{T}(\O_5)^{(2,2)}_{\mathbf{0}} = \#\left\{ \; \ytableaushort{22,44}_{\textstyle{,}} \quad 
    \ytableaushort{22,55}_{\textstyle{,}} \quad 
    \ytableaushort{33,44}_{\textstyle{,}} \quad 
    \ytableaushort{33,55}_{\textstyle{,}} \quad 
    \ytableaushort{44,55}\;
    \right\} = 5.
    \]
    This agrees with the example in Section 4.2 of~\cite{Frohmader}, upon setting $q=1$ in that example.
\end{ex}

\begin{ex}
    Let $K = \GL_4$, and let $\la = (2,1,-2,-2)$ and $\bm{\delta} = (2, -1, -2, 0)$.
    Then we have $\la^+ = (2,1)$ and $\la^- = (2,2)$.
    By Theorem~\ref{thm:K to M}, the branching multiplicity $b^\la_{\bm{\delta}}$ equals the number of $\GL_4$-tableaux of shape $((2,1), \: (2,2))$ and weight $(2, -1, -2, 0)$.
    There is exactly one such $\GL_4$-tableau:
    \[
    b^{(2,1,-2,-2)}_{(2,-1,-2,0)} = \# \mathcal{T}(\GL_4)^{(2,1,-2,-2)}_{(2,-1,-2,0)} = \# \left\{ \; \left( \: \begin{ytableau} 1&1 \\ 4\end{ytableau}_{\textstyle{,}} \; 
    \begin{ytableau}2&3 \\ 3&4 \end{ytableau} \: \right)
    \;
    \right\} = 1.
    \]
    Since $M = \GL_1 \times \cdots \times \GL_1$ is the standard torus in $\GL_k$, the numbers $b^\la_{\bm{\delta}}$ give the weight multiplicities in the rational representation $F^\la_k$.
    In this way, the numbers $b^\la_{\bm{\delta}}$ generalize the Kostka numbers $K_{\la \mu}$, which count the number of semistandard tableau of shape $\la$ and content $\mu$, and which give the dimension of the $\mu$-weight space in a polynomial representation $F^\la_k$.
    The tableau pairs in $\mathcal{T}(\GL_k)^\la_{\bm{\delta}}$ can also be viewed as a generalization of Gelfand--Zeitlin tableaux~\cite{GW}*{Cor.~8.1.7}, for rational (rather than polynomial) representations of $\GL_k$.
    See also the character-theoretic approach to rational $\GL_k$-tableaux taken by Stembridge~\cite{Stembridge}.
\end{ex}

\begin{ex}
    Let $K = \Sp_6$, and let $\la = (2,2)$ and $\bm{\delta} = \mathbf{0}$.
    By Theorem~\ref{thm:K to M} and~\eqref{T(KC)}, the branching multiplicity $b^{(2,2)}_{\mathbf{0}}$ equals the number of $\Sp_6$-ballot tableaux of shape $(2,2)$ and weight $\mathbf{0}$.
    Each such semistandard tableau $T$ is filled with entries from $\{1, \bar{1}, 2, \bar{2}, 3, \bar{3}\}$, satisfies the condition in Definition~\eqref{def:KC-tableaux}(c), and has an equal number of $i$'s and $\bar{\imath}$'s for each $1 \leq i \leq 3$.
    There are three such tableaux:
    \[
    b^{(2,2)}_{\mathbf{0}} = \#
    \mathcal{T}(\Sp_6)^{(2,2)}_{\mathbf{0}} = \#\left\{ \; \begin{ytableau}2&2 \\ \raisebox{-2pt}{$\overline{2}$} & \raisebox{-2pt}{$\overline{2}$} \end{ytableau}_{\textstyle{,}} \quad 
    \begin{ytableau}2&3 \\ \raisebox{-2pt}{$\overline{2}$} & \raisebox{-2pt}{$\overline{3}$} \end{ytableau}_{\textstyle{,}} \quad
    \begin{ytableau}3&3 \\ \raisebox{-2pt}{$\overline{3}$} & \raisebox{-2pt}{$\overline{3}$} \end{ytableau}
    \;
    \right\} = 3.
    \]
\end{ex}

\begin{lemma}
    \label{lemma:Ok assoc rule}
    Let $\la \in \widehat{\O}_k$, let $\mu \in \widehat{\O}_{k-1}$, and let $\delta \in \widehat{\O}_1 = \{0,1\}$.
    Let $\overline{\la} \in \widehat{\O}_k$, $\overline{\mu} \in \widehat{\O}_{k-1}$, and $\overline{\delta} \in \widehat{\O}_1$ be the associated partitions, in the sense of Definition~\ref{def:associated}.
    We have the following:
   \begin{enumerate}[label=\textup{(\arabic*)}]
    
    \item $
    b^{\la}_{(\mu,\delta)} = b^{\overline{\la}}_{(\overline{\mu},\overline{\delta})}$.

    \item $\la / \mu$ is a  strip such that $|\la/\mu| \equiv \delta \: {\rm mod } \: 2$ if and only if $\overline{\la} / \overline{\mu}$ is a strip such that $|\overline{\la} / \overline{\mu}| \equiv \overline{\delta} \: {\rm mod } \: 2$.
    \end{enumerate}
        
\end{lemma}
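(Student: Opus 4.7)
The plan is to treat the two parts separately, using equation~\eqref{tensor det} as the main input.

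For part (1), the crucial observation I would make is that the determinant character $\mathbf{det}_k$ of $\O_k$, when restricted to the block-diagonal subgroup $M = \O_{k-1} \times \O_1$, factors as $\mathbf{det}_{k-1} \otimes \mathbf{det}_1$, because the determinant of a block-diagonal orthogonal matrix is the product of the block determinants. Combined with the involutive identity $E^{\overline{\overline{\la}}}_k = E^\la_k$ (equivalently, the fact that $\mathbf{det}^{\otimes 2}$ is trivial in each relevant setting), this allows me to rewrite
\[
b^{\overline{\la}}_{(\overline{\mu},\overline{\delta})} = \dim\hom_M\bigl(E^{\overline{\mu}}_{k-1}\otimes E^{\overline{\delta}}_1,\, E^\la_k\otimes \mathbf{det}_{k-1}\otimes\mathbf{det}_1\bigr)
\]
and then move both determinant factors to the $M$-side. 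Since $E^{\overline{\mu}}_{k-1}\otimes \mathbf{det}_{k-1}\cong E^\mu_{k-1}$ and similarly for $\delta$, the right-hand side becomes $b^\la_{(\mu,\delta)}$.

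For part (2), I would work at the level of column lengths. Writing $c_i$ and $d_i$ for the lengths of the $i$-th columns of $\la$ and $\mu$, the definition of the associated partition shows that $\overline{\la}$ and $\overline{\mu}$ agree with $\la$ and $\mu$ in all columns of index $\geq 2$, while the first columns become $k-c_1$ and $(k-1)-d_1$. Thus in columns with $i\geq 2$ the skew shapes $\la/\mu$ and $\overline{\la}/\overline{\mu}$ have identical box counts, whereas the first-column skew-difference is transformed by $c_1 - d_1 \mapsto 1 - (c_1-d_1)$, an involution on $\{0,1\}$ that preserves the strip condition $c_1-d_1\in\{0,1\}$. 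This yields the strip equivalence immediately. For the parity, the computation $|\overline{\la}/\overline{\mu}| - |\la/\mu| = 1 - 2(c_1-d_1)$ shows that the two cardinalities always differ in parity. Combined with the elementary fact $\overline{\delta} = 1-\delta$ (since $\overline{0}=1$ and $\overline{1}=0$), this gives the desired parity equivalence.

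I expect no real obstacle. Part (1) rests entirely on the restriction formula for the determinant character to the block-diagonal subgroup, and part (2) reduces to bookkeeping in the first column, which is the only column affected by the associated operation. Each argument is essentially a short verification once the correct viewpoint is fixed.
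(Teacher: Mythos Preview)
Your proposal is correct and follows essentially the same approach as the paper: part~(1) is derived from the factorization $\mathbf{det}_k|_{\O_{k-1}\times\O_1}\cong\mathbf{det}_{k-1}\otimes\mathbf{det}_1$ together with~\eqref{tensor det}, and part~(2) is handled by observing that passing to associated partitions alters only the first column, sending $c_1-d_1$ to $1-(c_1-d_1)$ and hence flipping the parity of $|\la/\mu|$. Your column-length bookkeeping in part~(2) is a slightly more explicit version of the paper's argument, which phrases the same computation in terms of $\ell(\la)-\ell(\mu)$.
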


\begin{proof}
    To prove (1), observe that upon restricting the determinant representation of $\O_k$ to the subgroup $\O_{k-1} \times \O_1$, we have $\mathbf{det}_k \cong \mathbf{det}_{k-1} \otimes \mathbf{det}_1$.
    Thus, starting from~\eqref{tensor det} and then restricting from $\O_k$ to $\O_{k-1} \times \O_1$, we have
    \begin{align*}
        E^{\overline{\la}}_k & \cong E^\la_k \otimes \mathbf{det}_k \\
        & \cong \left(\bigoplus_{\mu,\delta} b^\la_{(\mu,\delta)} E^\mu_{k - 1} \otimes E^\delta_1 \right) \otimes (\mathbf{det}_{k-1} \otimes \mathbf{det}_1) \\
        & \cong \bigoplus_{\mu,\delta} b^{\la}_{(\mu,\delta)} \left( E^\mu_{k-1} \otimes \mathbf{det}_{k-1} \right) \otimes \left( E^\delta_1 \otimes \mathbf{det}_1 \right) \\
        & \cong \bigoplus_{\mu,\delta} b^\la_{(\mu,\delta)}  E^{\overline{\mu}}_{k-1} \otimes E^{\overline{\delta}}_1 \\
        &= \bigoplus_{\mu, \delta} b^{\overline{\la}}_{(\overline{\mu}, \overline{\delta})} E^{\overline{\mu}}_{k-1} \otimes E^{\overline{\delta}}_1.
    \end{align*}

    To prove (2), suppose that $\la / \mu$ is a strip such that $|\la / \mu| \equiv \delta \: {\rm mod } \: 2$.
    Then $\ell(\la) - \ell(\mu)$ is either 0 or 1.
    Since $\ell(\overline{\la}) - \ell(\overline{\mu}) = (k-\ell(\la)) - (k-1 - \ell(\mu)) = 1 - (\ell(\la) - \ell(\mu))$, we have that $\ell(\overline{\la}) - \ell(\overline{\mu})$ is also either 0 or 1, and has the opposite parity as $\ell(\la) - \ell(\mu)$.
    Since associated partitions differ only in their first column, it follows that $\overline{\la} / \overline{\mu}$ is also a strip, and that $|\overline{\la} / \overline{\mu}|$ has the opposite parity as $|\la/\mu|$, so that $|\overline{\la} / \overline{\mu}| \equiv \overline{\delta} \: {\rm mod} \: 2$.
    Since $\la \mapsto \overline{\la}$ is an involution, the same argument also proves the converse.   
\end{proof}

\begin{proof}[Proof of Theorem~\ref{thm:K to M}]\

    (a) Suppose $K = \O_k$.
    We begin by finding a branching rule from $\O_{k}$ to $\O_{k-1} \times \O_1$.
    The proof then proceeds by iterating this branching rule to obtain the branching rule from $K$ to $M$.
    For this, let $\la \in \widehat{\O}_k$ with $\ell(\la) \leq k/2$.
    We apply the Howe duality setting of Theorem~\ref{thm:Howe duality} with $n = \lfloor k/2 \rfloor$.
    Let $\mu_{k-1} \in \widehat{\O}_{k-1}$ and $\delta_k \in \widehat{\O}_1 = \{0,1\}$.
    By seesaw reciprocity~\eqref{eq:seesaw}, we have
    \begin{equation}
    \label{first mult O}
    b^\la_{(\mu_{k-1}, \delta_k)} = \left[E^{\mu_{k-1}}_{k-1} \otimes E^{\delta_k}_1 : E^\la_{k} \right] =  \left[ \widetilde{E}^\la_{2n} : \widetilde{E}^{\mu_{k-1}}_{2n} \otimes \widetilde{E}^{\delta_k}_{2n} 
    \right]_{\textstyle{.}}
    \end{equation}
    Note that if $\ell(\mu_{k-1}) > n$, then $E^{\mu_{k-1}}_{k-1}$ and thus $\widetilde{E}^{\mu_{k-1}}_{2n}$ do not occur in this Howe duality setting.
    It follows from~\eqref{first mult O} and from the complete reducibility in seesaw reciprocity~\eqref{seesaw paradigm} that $b^\la_{(\mu_{k-1}, \delta_k)} = 0$ for such $\mu_{k-1}$.
    Therefore we may assume that $\ell(\mu_{k-1}) \leq k/2$.
    It then follows from the table in Lemma~\ref{lemma:stable} that we are in the stable range for both $\O_k$ and $\O_{k-1}$.
    Therefore, by Lemmas~\ref{lemma:stable} and~\ref{lemma:k=1}, we have the following isomorphisms of $\k' = \gl_n$-modules:
    \begin{align*}
        \widetilde{E}^\la_{2n} & \cong \C[{\rm SM}_n] \otimes F^{\la}_n,\\
        \widetilde{E}^{\mu_{k-1}}_{2n} & \cong \C[{\rm SM}_n] \otimes F^{\mu_{k-1}}_n,\\
        \widetilde{E}^{\delta_k}_{2n} & \cong \bigoplus_{\mathclap{\substack{m \in \mathbb{N}:\\
        m \equiv \delta_k \: {\rm mod} \: 2}}}
        F^{(m)}_n.
    \end{align*}
    Using these to rewrite~\eqref{first mult O}, we obtain
\begin{equation}
    \label{second mult O}
    b^\la_{(\mu_{k-1}, \delta_k)} = \left[\C[{\rm SM}_n] \otimes F^{\la}_n : \C[{\rm SM}_n] \otimes F^{\mu_{k-1}}_n \otimes \bigoplus_{\mathclap{\substack{m \in \mathbb{N}:\\
        m \equiv \delta_k \: {\rm mod} \: 2}}} F^{(m)}_n\right]_{\textstyle{.}}
\end{equation}
By the Pieri rule~\eqref{Pieri}, the $\k'$-module on the right-hand side of the colon in~\eqref{second mult O} becomes
\begin{align}
    & \phantom{=} \bigoplus_{\mathclap{\substack{m \in \mathbb{N}:\\
        m \equiv \delta_k \: {\rm mod} \: 2}}} \C[{\rm SM}_n] \otimes \left( F^{\mu_{k-1}}_n \otimes F^{(m)}_n \right) \nonumber \\
    & \cong \bigoplus_{\mathclap{\substack{m \in \mathbb{N}:\\
        m \equiv \delta_k \: {\rm mod} \: 2}}} \C[{\rm SM}_n] \otimes \Bigg(\bigoplus_{\substack{\ga: \\ \ga/\mu_{k-1} \text{ strip},\\
        |\ga/\mu_{k-1}| = m}}  \hspace{-3ex} F^{\ga}_n\Bigg) \nonumber \\
        & \cong \bigoplus_{\mathclap{\substack{\ga: \\ \ga / \mu_{k-1} \text{ strip}, \\ |\ga / \mu_{k-1} | \equiv \delta_{k} \: {\rm mod} \: 2}}} \C[{\rm SM}_n] \otimes F^\ga_n. \label{gamma O}
\end{align}
(We use the term \emph{strip} in the sense of Definition~\ref{def:strip}.) 
Define the following weight function on skew diagrams, which takes values in $\widehat{\O}_1 = \{0,1\}$:
\begin{equation}
    \label{wt O}
    {\rm wt}(\gamma / \mu) \coloneqq |\gamma / \mu| \: {\rm mod} \: 2.
\end{equation}
(We write this ``wt'' in Roman type to distinguish it from the $\widehat{M}$-valued function $\mathbf{wt}$ in Definition~\ref{def:KC-tableaux}.)
Substituting~\eqref{gamma O} into~\eqref{second mult O}, we obtain the following branching rule from $\O_k$ to $\O_{k-1} \times \O_{1}$:
\begin{equation}
    \label{Ok rule}
    b^\la_{(\mu_{k-1}, \delta_k)} = \begin{cases}
        1, & \text{$\la / \mu_{k-1}$ is a strip such that ${\rm wt}(\la / \mu_{k-1}) = \delta_k$},\\
        0 & \text{otherwise}.
    \end{cases} 
\end{equation}
Although we initially assumed that $\ell(\la) \leq k/2$, Lemma~\ref{lemma:Ok assoc rule} implies that the rule~\eqref{Ok rule} is valid for all $\la \in \widehat{\O}_k$, since $\ell(\la) > k/2$ implies that $\ell(\overline{\la}) < k/2$.
We thereby recover the following result of King~\cite{King75}*{(4.14)} and Proctor~\cite{Proctor}*{Prop.~10.1}:
\begin{equation}
    \label{O before iterating}
    E^\la_{k} \cong \bigoplus_{\delta_k \in \{0,1\}} \Bigg(\bigoplus_{\substack{\mu_{k-1} \in \widehat{\O}_{k-1} : \\
    \la / \mu_{k-1} \text{ strip,} \\ {\rm wt}(\la / \mu_{k-1}) = \delta_k}}   \hspace{-3ex} E^{\mu_{k-1}}_{k-1} \otimes E^{\delta_k}_1 \Bigg)_{\textstyle{.}}
\end{equation}
Starting with $\la = \mu_k$, we rewrite~\eqref{O before iterating} to obtain a branching rule from $\O_i$ to $\O_{i-1} \times \O_1$, for each $i = k, k-1, \ldots, 2$:
\begin{equation}
\label{iterate O}
    E^{\mu_{i}}_{i} \cong \bigoplus_{\delta_i \in \{0,1\}} \Bigg(\bigoplus_{\substack{\mu_{i-1} \in \widehat{\O}_{i-1} : \\
    \mu_i / \mu_{i-1} \text{ strip,} \\ {\rm wt}(\mu_i / \mu_{i-1}) = \delta_i}}   \hspace{-3ex} E^{\mu_{i-1}}_{i-1} \otimes E^{\delta_i}_1 \Bigg)_{\textstyle{.}}
\end{equation}
Iterating~\eqref{iterate O} to decompose $E^{\mu_i}_i$ for $i = k, k-1, \ldots, 2$, we obtain the following decomposition as a representation of $M = (\O_1)^k$:
\begin{equation}
\label{final O}
    E^\la_k \cong \bigoplus_{\bm{\delta} \in \{0,1\}^k} \left(\#\mathcal{M}^\la_{\bm{\delta}} \bigotimes_{i=1}^k E^{\delta_i}_1 \right)_{\textstyle{,}}
\end{equation}
where
\[
\mathcal{M}^\la_{\bm{\delta}} \coloneqq \left\{
\text{chains } (0 = \mu_0 \subseteq \mu_1 \subseteq \cdots \subseteq \mu_{k-1} \subseteq \mu_k = \la) : \; \parbox{4cm}{$\mu_i \in \widehat{\O}_i$, \\
$\mu_i / \mu_{i-1}$ is a strip,\\
and ${\rm wt}(\mu_i / \mu_{i-1}) = \delta_i$ \\ for all $1 \leq i \leq k$ }
\right\}_{\textstyle{.}}
\]

To complete the proof, we construct the following bijection between $\mathcal{M}^\la_{\bm{\delta}}$ and $\mathcal{T}(\O_k)^\la_{\bm{\delta}}$.
In one direction, let $\bmu \in \mathcal{M}^\la_{\bm{\delta}}$.
To construct a tableau $T \in \mathcal{T}(\O_k)^\la_{\bm{\delta}}$, start with the subtableau $T_1$ of shape $\mu_1$ filled with the entry $1$; note that since $\mu_1 \in \widehat{\O}_1$, this subtableau $T_1$ is either empty or contains a single box, and is (trivially) semistandard.
We construct the successive subtableaux $T_2 \subseteq \cdots \subseteq T_k = T$ in the same way: to construct $T_i$, fill every box in the skew diagram $\mu_i / \mu_{i-1}$ with the entry $i$, and append to $T_{i-1}$, thereby forming a new subtableau $T_i$ of shape $\mu_i$.
Note that $T_i$ remains semistandard, since $\mu_i / \mu_{i-1}$ is a strip and therefore each column contains at most one entry $i$.
Moreover, $T_i$ is an $\O_i$-tableau since $T_{i-1}$ is an $\O_{i-1}$-tableau and $\mu_i \in \widehat{\O}_i$.
Finally, we have $\mathbf{wt}(T_i) = (\delta_1, \ldots, \delta_i)$ because
\[
\#\{ \text{boxes in $T_i$ with entry $i$}\} \: {\rm mod} \: 2 = |\mu_i / \mu_{i-1}| \: {\rm mod} \: 2 = {\rm wt}(\mu_i / \mu_{i-1}) = \delta_i.
\]
Since $\mu_k = \la$, this process results in an $\O_k$-tableau $T_k = T \in \mathcal{T}(\O_k)^\la_{\bm{\delta}}$.
Conversely, in the other direction, let $T \in \mathcal{T}(\O_k)^\la_{\bm{\delta}}$.
To construct a chain $\bmu \in \mathcal{M}^\la_{\bm{\delta}}$, let $\mu_i$ be the shape of the subtableau of $T_i$ whose entries are $\leq i$, for all $1 \leq i \leq k$.
We leave it to the reader to check that these maps are mutually inverse.
We thus have $\#\mathcal{M}^\la_{\bm{\delta}} = \#\mathcal{T}(\O_k)^\la_{\bm{\delta}}$, and the result follows from~\eqref{final O}.

(b) Suppose $K = \GL_k$.
    As in part (a) above, we begin by finding a branching rule from $\GL_{k}$ to $\GL_{k-1} \times \GL_1$.
    Let $\la \in \widehat{\GL}_k$, and let $\mu_{k-1} \in \widehat{\GL}_{k-1}$.
    Let $\delta_k \in \widehat{\GL}_1 = \mathbb{Z}$.
    We now apply the Howe duality setting of Theorem~\ref{thm:Howe duality}, with $p = \ell(\la^+)$ and $q = \ell(\la^-)$.
    By seesaw reciprocity~\eqref{eq:seesaw}, we have
    \begin{equation}
    \label{first mult GL}
    b^\la_{(\mu_{k-1}, \delta_k)} = \left[F^{\mu_{k-1}}_{k-1} \otimes F^{\delta_k}_1 : F^\la_{k} \right] =  \left[ \widetilde{F}^\la_{p,q} : \widetilde{F}^{\mu_{k-1}}_{p,q} \otimes \widetilde{F}^{\delta_k}_{p,q} 
    \right]_{\textstyle{.}}
    \end{equation}
    Since $p+q = \ell(\la^+) + \ell(\la^-) \leq k$, it follows from the table in Lemma~\ref{lemma:stable} that we are in the stable range for both $\GL_k$ and $\GL_{k-1}$.
    It then follows from Lemmas~\ref{lemma:stable} and Lemma~\ref{lemma:k=1} that we have the following isomorphisms of $\k' = \gl_p \oplus \gl_q$-modules:
    \begin{align*}
        \widetilde{F}^\la_{p,q} & \cong \C[\M_{p,q}] \otimes F^{\la^+}_p \otimes F^{\la^-}_q,\\
        \widetilde{F}^{\mu_{k-1}}_{p,q} & \cong \C[\M_{p,q}] \otimes F^{\mu_{k-1}^+}_p \otimes F^{\mu_{k-1}^-}_q,\\
        \widetilde{F}^{\delta_k}_{p,q} & \cong \bigoplus_{\mathclap{\substack{m \in \mathbb{N}:\\
        m+\delta_k \geq 0}}}
        F^{(m+\delta_k)}_p \otimes F^{(m)}_q.
    \end{align*}
    By Theorem~\ref{thm:Howe duality}, $\widetilde{F}^{\mu_{k-1}}_{p,q} \neq 0$ if and only if $\ell(\mu^+_{k-1}) \leq p$ and $\ell(\mu^-_{k-1}) \leq q$. 
    Using the decompositions above to rewrite the right-hand side of~\eqref{first mult GL}, we obtain
\begin{equation}
    \label{second mult GL}
    b^\la_{(\mu_{k-1}, \delta_k)} = \left[\C[\M_{p,q}] \otimes F^{\la^+}_p \otimes F^{\la^-}_q : \C[\M_{p,q}] \otimes F^{\mu_{k-1}^+}_p \otimes F^{\mu_{k-1}^-}_q \otimes \bigoplus_{\mathclap{\substack{m \in \mathbb{N}:\\
        m+\delta_k \geq 0}}} F^{(m+\delta_k)}_p \otimes F^{(m)}_q\right]_{\textstyle{.}}
\end{equation}
Using the Pieri rule~\eqref{Pieri}, the $\k'$-module on the right-hand side of the colon in~\eqref{second mult GL} can be rewritten as
\begin{align}
    & \phantom{=} \bigoplus_{\mathclap{\substack{m \in \mathbb{N}:\\
        m+\delta_k \geq 0}}}  \C[\M_{p,q}] \otimes \left(F^{\mu_{k-1}^+}_p \otimes F^{(m+\delta_k)}_p\right) \otimes \left(F^{\mu_{k-1}^-}_q \otimes F^{(m)}_q\right) \nonumber \\
    & \cong \bigoplus_{\mathclap{\substack{m \in \mathbb{N}:\\
        m+\delta_k \geq 0}}}  \C[\M_{p,q}] \otimes \Bigg(\bigoplus_{\substack{\ga^+: \\ \ga^+ / \mu^+_{k-1} \text{ strip,} \\ |\ga^+ / \mu^+_{k-1}| = m + \delta_k}}  \hspace{-5ex} F^{\ga^+}_p\Bigg) \otimes \Bigg(\bigoplus_{\substack{\ga^-: \\ \ga^- / \mu^-_{k-1} \text{ strip,} \\ |\ga^- / \mu^-_{k-1}| = m}}  \hspace{-4ex} F^{\ga^-}_q\Bigg)_{\textstyle{.}}
        \label{GL rewrite}
\end{align}
Upon rearranging the direct sums, we rewrite~\eqref{GL rewrite} as
\begin{equation}
\label{gamma GL}
    \bigoplus_{\mathclap{\substack{\ga: \\ \ga^\pm / \mu^\pm_{k-1} \text{ strips,} \\ |\ga^+ / \mu^+_{k-1}| - |\ga^- / \mu^-_{k-1}| = \delta_k}}} \C[\M_{p,q}] \otimes F^{\ga^+}_p \otimes F^{\ga^-}_q.
\end{equation}

In general, for $\ga = (\ga^+, \ga^-)$ and $\mu = (\mu^+, \mu^-)$, we write $\mu \subseteq \gamma$ to express that $\mu^+ \subseteq \ga^+$ and $\mu^- \subseteq \ga^-$.
Define the following weight function on pairs $\gamma / \mu \coloneqq (\gamma^+/\mu^+, \gamma^- / \mu^-)$ of strips, taking values in $\widehat{\GL}_1 = \mathbb{Z}$:
\begin{equation*}
    \label{wt GL}
    {\rm wt}(\gamma / \mu) \coloneqq |\gamma^+ / \mu^+| - |\gamma^- / \mu^-|.
\end{equation*}
Substituting~\eqref{gamma GL} into~\eqref{second mult GL}, we obtain the following branching rule from $\GL_k$ to $\GL_{k-1} \times \GL_{1}$:
\[
    b^\la_{(\mu_{k-1}, \delta_k)} = 
    \begin{cases}
        1, & \la^\pm / \mu^\pm_{k-1} \text{ are strips and }{\rm wt}(\la / \mu_{k-1}) = \delta_k,\\
        0 & \text{otherwise}.
    \end{cases}
\]
This yields the following decomposition of $F^\la_k$:
\begin{equation}
    \label{GL before iterating}
    F^\la_{k} \cong \bigoplus_{\delta_k \in \mathbb{Z}} \Bigg(
    \bigoplus_{\substack{\mu_{k-1} \in \widehat{\GL}_{k-1}: \\ \la^\pm / \mu^\pm_{k-1} \text{ strips,} \\ {\rm wt}(\la / \mu_{k-1}) = \delta_k}} 
    F^{\mu_{k-1}}_{k-1} \otimes F^{\delta_k}_1 \Bigg)_{\textstyle{.}}
\end{equation}
Starting with $\la = \mu_k$ and iterating~\eqref{GL before iterating} to decompose $F^{\mu_i}_i$ for $i = k, k-1, \ldots, 2$, we obtain the following decomposition of $F^\la_k$ as a representation of $M = (\GL_1)^k$:
\begin{equation}
    \label{final GL}
    F^\la_{k} \cong \bigoplus_{\bm{\delta} \in \mathbb{Z}^k} \left( \#\mathcal{M}^\la_{\bm{\delta}} \bigotimes_{i=1}^k F^{\delta_i}_1\right)_{\textstyle{,}}
\end{equation}
where
\[
\mathcal{M}^\la_{\bm{\delta}} \coloneqq \left\{
\text{chains } (0 = \mu_0 \subseteq \mu_1 \subseteq \cdots \subseteq \mu_{k-1} \subseteq \mu_k = \la) : \;\; \parbox{4cm}{$\mu_i \in \widehat{\GL}_i$, \\
$\mu_i^\pm / \mu_{i-1}^\pm$ are strips,\\
and ${\rm wt}(\mu_i / \mu_{i-1}) = \delta_i$ \\
for all $1 \leq i \leq k$}
\right\}_{\textstyle{.}}
\]

There is a natural bijection $\mathcal{M}^\la_{\bm{\delta}} \longrightarrow \mathcal{T}(\GL_k)^\la_{\bm{\delta}}$, given in a similar way as the proof of part (a) above.
We therefore have $\#\mathcal{M}^\la_{\bm{\delta}} = \#\mathcal{T}(\GL_k)^\la_{\bm{\delta}}$, and the result follows from~\eqref{final GL}.

    (c) Suppose $K = \Sp_{2k}$.
    As in parts (a) and (b) above, we begin by finding a branching rule from $\Sp_{2k}$ to $\Sp_{2(k-1)} \times \Sp_2$.
    Let $\la \in \widehat{\Sp}_{2k}$, and let $\mu \in \widehat{\Sp}_{2(k-1)}$.
    Let $\delta_k \in \widehat{\Sp}_2 = \mathbb{N}$.
    We now apply the Howe duality setting of Theorem~\ref{thm:Howe duality} with $n=k$.
    By seesaw reciprocity~\eqref{eq:seesaw}, we have
    \begin{equation}
    \label{first mult Sp}
    b^\la_{(\mu_{k-1}, \delta_k)} = \left[V^{\mu_{k-1}}_{2(k-1)} \otimes V^{\delta_k}_2 : V^\la_{2k} \right] =  \left[ \widetilde{V}^\la_{2n} : \widetilde{V}^{\mu_{k-1}}_{2n} \otimes \widetilde{V}^{\delta_k}_{2n} \right].
    \end{equation}
    Since $n=k$, it follows from the table in Lemma~\ref{lemma:stable} that we are in the stable range for both $\Sp_{2k}$ and $\Sp_{2(k-1)}$.
    It then follows from Lemmas~\ref{lemma:stable} and~\ref{lemma:k=1} that we have the following isomorphisms of $\k' = \gl_n$-modules:
    \begin{align*}
        \widetilde{V}^\la_{2n} & \cong \C[{\rm AM}_n] \otimes F^\la_n,\\
        \widetilde{V}^{\mu_{k-1}}_{2n} & \cong \C[{\rm AM}_n] \otimes F^{\mu_{k-1}}_n,\\
        \widetilde{V}^{\delta_k}_{2n} & \cong \bigoplus_{m \in \mathbb{N}} F^{(m+\delta_k,m)}_n.
    \end{align*}
    Using these to rewrite~\eqref{first mult Sp}, we obtain
\begin{equation}
    \label{second mult Sp}
    b^\la_{(\mu_{k-1}, \delta_k)} = \left[ \C[{\rm AM}_n] \otimes F^\la_n : \C[{\rm AM}_n] \otimes F^{\mu_{k-1}}_n \otimes \bigoplus_{m \in \mathbb{N}}F^{(m+\delta_k,m)}_n \right]_{\textstyle{,}}
\end{equation}
where the $\k'$-module on the right-hand side of the colon can be rewritten as
\begin{align}
    & \phantom{=} \bigoplus_{m\in \mathbb{N}} \C[{\rm AM}_n] \otimes \left(F^{\mu_{k-1}}_n \otimes F^{(m+\delta_k,m)}_n\right) \nonumber \\
    & \cong \bigoplus_{m\in \mathbb{N}} \C[{\rm AM}_n] \otimes \bigoplus_\ga c^\ga_{\mu_{k-1}, (m+\delta_k,m)} \: F^\ga_n. \label{rewrite Sp over m}
\end{align}
The Littlewood--Richardson rule~\eqref{LR rule} states that
\begin{equation}
\label{Sp inductive}
    c^\ga_{\mu_{k-1}, (m+\delta_k,m)} = \# \left\{ \text{LR tableaux of shape $\ga / \mu_{k-1}$ and content $(m+\delta_k, m)$} \right\}.
\end{equation}
By~\eqref{LR double} this coefficient is nonzero only if $\gamma / \mu_{k-1}$ is a double strip, in the sense of Definition~\ref{def:strip}.
If $S$ is an LR tableau with content $(\ell, m)$, then define the following weight function with values in $\widehat{\Sp}_2 = \mathbb{N}$:
\begin{equation}
    \label{wt Sp}
    {\rm wt}(S) \coloneqq \ell - m.
\end{equation}
Equivalently, ${\rm wt}(S)$ equals the number of 1's in $S$ minus the number of 2's in $S$.
For $\ga / \mu$ a double strip and $\delta \in \mathbb{N}$, define the following collection of LR tableaux:
\begin{equation}
\label{L}
    \mathcal{S}^{\gamma / \mu}_{\delta} \coloneqq \{ \text{LR tableaux $S$ : $S$ is filled with $1$'s and $2$'s, $S$ has  shape $\gamma / \mu$, and  ${\rm wt}(S) = \delta$}\}.
\end{equation}
Using~\eqref{Sp inductive}--\eqref{L}, we rewrite~\eqref{rewrite Sp over m} as
\begin{equation}
\label{gamma Sp}
    \bigoplus_{\mathclap{\substack{\gamma: \\ \gamma / \mu_{k-1} \text{ dbl. strip}}}} \#\mathcal{S}^{\gamma/\mu_{k-1}}_{\delta_k} (\C[{\rm AM}_n] \otimes F^\gamma_n).
\end{equation}
Substituting~\eqref{gamma Sp} into~\eqref{second mult Sp}, we obtain the following branching rule from $\Sp_{2k}$ to $\Sp_{2(k-1)} \times \Sp_2$:
\[
b^\la_{(\mu_{k-1}, \delta_k)} = \#\mathcal{S}^{\la / \mu_{k-1}}_{\delta_k}.
\]
This gives the following decomposition of $V^\la_{2k}$:
\begin{equation}
    \label{Sp before iterating}
    V^\la_{2k} \cong 
    \bigoplus_{\delta_k \in \mathbb{N}}
    \Bigg(     \bigoplus_{\substack{\mu_{k-1} \in \widehat{\Sp}_{2(k-1)}: \\ \la / \mu_{k-1} \text{ dbl. strip}}}
    \hspace{-4ex} \#\mathcal{S}^{\la/\mu_{k-1}}_{\delta_k} 
    (V^{\mu_{k-1}}_{2(k-1)} \otimes V^{\delta_k}_2) \Bigg)_{\textstyle{.}}
\end{equation}
 (This is a more explicit version of the formula due to King~\cite{King75}*{equation (4.15)} and Proctor~\cite{Proctor}*{Prop.~10.3}.)
Starting with $\la = \mu_k$ and iterating~\eqref{Sp before iterating} to decompose $V^{\mu_i}_{2i}$ for $i = k, k-1, \ldots, 2$, we obtain the following decomposition as a representation of $M = (\Sp_2)^k$:
\begin{equation}
    \label{final Sp}
    V^\la_{2k} \cong \bigoplus_{\bm{\delta} \in \mathbb{N}^k} \left( \#\mathcal{M}^\la_{\bm{\delta}} \bigotimes_{i=1}^k V^{\delta_i}_2 \right)_{\textstyle{,}}
\end{equation}
where $\mathcal{M}^{\la}_{\bm{\delta}}$ is the following set consisting of $k$-tuples of LR tableaux:
\begin{equation}
    \label{big M sp}
    \mathcal{M}^\la_{\bm{\delta}} \coloneqq \left\{
(S_1, \ldots, S_k) : \; \parbox{10cm}{there exists a chain $(0 = \mu_0 \subseteq \mu_1 \subseteq \cdots \subseteq \mu_{k-1} \subseteq \mu_k = \la)$\\
such that $\mu_i \in \widehat{\Sp}_{2i}$,\\
$\mu_i / \mu_{i-1}$ is a double strip,\\
and $S_i \in \mathcal{S}^{\mu_i / \mu_{i-1}}_{\delta_i}$ for all $1 \leq i \leq k$}
\right\}_{\textstyle{.}}
\end{equation}

To complete the proof, we construct a bijection $\mathcal{M}^\la_{\bm{\delta}} \longrightarrow \mathcal{T}(\Sp_{2k})^\la_{\bm{\delta}}$ in the same way as in parts (a) and (b) above.
In one direction, let $(S_1, \ldots, S_k) \in \mathcal{M}^\la_{\bm{\delta}}$, and let $\bmu$ be the chain such that each $\mu_i / \mu_{i-1}$ is the shape of $S_i$.
To construct a tableau $T \in \mathcal{T}(\Sp_{2k})^\la_{\bm{\delta}}$, start with the subtableau $T_1 = S_1$.
Note that because $\mu_1 \in \widehat{\Sp}_2$ and $S_1 \in \mathcal{S}^{\mu_1}_{\delta_1}$, this subtableau $T_1$ is a single row consisting of exactly $\delta_1$ boxes, all containing the entry 1.
Therefore we have $T_1 \in \mathcal{T}(\Sp_2)^{\mu_1}_{\delta_1}$, where necessarily $\mu_1 = (\delta_1)$.
We construct the successive subtableaux $T_2 \subseteq \cdots \subseteq T_k = T$ in the same way, as follows.
Suppose we have constructed $T_i \in \mathcal{T}(\Sp_{2(i-1)})^{\mu_i-1}_{(\delta_1, \ldots, \delta_{i-1})}$ an $\Sp_{2(i-1)}$-ballot tableau of shape $\mu_{i-1}$ and weight $(\delta_1, \ldots, \delta_{i-1})$.
To construct $T_i$ from $T_{i-1}$, take $S_i$, replace the entries 1 and 2 with $i$ and $\bar{\imath}$, respectively, and append to $T_{i-1}$.
Note that $T_i$ belongs to $\mathcal{T}(\Sp_{2i})^{\mu_i}_{(\delta_1, \ldots, \delta_i)}$: in particular, it is an $\Sp_{2i}$-ballot tableau because $\mu_i \in \widehat{\Sp}_{2i}$, $T_{i-1}$ is an $\Sp_{2(i-1)}$-ballot tableau, and because $S_i$ is an LR tableau.
We have $\mathbf{wt}(T_i) = (\delta_1, \ldots, \delta_i)$ because
\begin{align*}
& \phantom{=.} \#\{\text{boxes in $T_i$ with entry $i$}\} - \#\{\text{boxes in $T_i$ with entry $\bar{\imath}$}\} \\
& = \#\{ \text{boxes in $S_i$ with entry $1$} \} - \#\{ \text{boxes in $S_i$ with entry $2$} \} = {\rm wt}(S_i) = \delta_i,
\end{align*}
where the second equality follows from~\eqref{wt Sp}, and the third equality follows from the definitions of $\mathcal{S}^{\mu_i / \mu_{i-1}}_{\delta_i}$ and $\mathcal{M}^\la_{\bm{\delta}}$ in~\eqref{L} and~\eqref{big M sp}.
Since $\mu_k = \lambda$, this process results in a tableau $T \in \mathcal{T}(\Sp_{2k})^\la_{\bm{\delta}}$.
Conversely, in the other direction, let $T \in \mathcal{T}(\Sp_{2k})^\la_{\bm{\delta}}$.
To construct a tuple $(S_1, \ldots, S_k) \in \mathcal{M}^\la_{\bm{\delta}}$, let $S_i$ consist of the boxes in $T$ with entries $i$ or $\bar{\imath}$; then replace each $i$ and $\bar{\imath}$ by $1$ and $2$, respectively.
Because $T \in \mathcal{T}(\Sp_{2k})^\la_{\bm{\delta}}$, it follows that each $S_i \in \mathcal{S}^{\mu_i / \mu_{i-1}}_{\delta_i}$, and therefore $(S_1, \ldots, S_k) \in \mathcal{M}^\la_{\bm{\delta}}$.
We leave it to the reader to check that these maps are mutually inverse.
We thus have $\# \mathcal{M}^\la_{\bm{\delta}} = \#\mathcal{T}(\Sp_{2k})^\la_{\bm{\delta}}$, and the result follows from~\eqref{final Sp}.
\end{proof}

\section{Special cases of the branching rule}
\label{sec:examples}

As an application of Theorem~\ref{thm:K to M}, we can easily determine the multiplicities $b^\la_{\bm{\delta}}$ in the special case where $\la$ (or each of $\la^+$ and $\la^-$, in the case where $K = \GL_k$) is a single row or a single column.
We use the shorthand $(1^a) \coloneqq (1, \ldots, 1)$, where $1$ is repeated $a$ times.
To express the branching rules below, for $\bm{\delta} = (\delta_1, \ldots, \delta_k) \in \widehat{M}$, we define
\[
|\bm{\delta}| \coloneqq \sum_{i=1}^k |\delta_i|,
\]
which is a nonnegative integer by~\eqref{M hat}.

We first recall the notion of a multiset.
A \emph{multiset} is a collection of elements in which the elements are allowed to occur multiple times.
For example, $[1,1,2]$ and $[1,2,2,2]$ are distinct multisets that are formed from the same finite set $\{1,2\}$.
The number of times a given element occurs in a multiset is referred to as the \emph{multiplicity} of the element.
The cardinality of the multiset is the sum of the multiplicities of its elements, so that $|[1,1,2]| = 3$ and $|[1,2,2,2]| = 4$.
We will use the following notation for multisets constructed from a finite subset $A = \{a_1, \ldots, a_k\}$.
We denote the multiset formed from $A$ where the element $a_i$ has multiplicity $m_i \geq 0$ by $\{a_1^{m_1}, a_2^{m_2}, \ldots, a_k^{m_k}\}$.
The cardinality of such a multiset is then $\sum_{i=1}^k m_i$.
It is well known~\cite{Stanley}*{p.~26} that the number of multisets of cardinality $n$ that can be constructed with elements taken from a finite set of cardinality $k$ is given by
\begin{equation}
    \label{mset binomial}
    \left(\!\!\binom{k}{n}\!\!\right) \coloneqq \binom{k+n-1}{n} = \binom{k+n-1}{k-1}_{\textstyle{.}}
\end{equation}

We point out that in the following corollary, part (a) gives the branching rule for the spherical harmonics, since the irreducible representation $E^{(a)}_k$ is realized by the space of homogeneous $\O_k$-spherical harmonic polynomials of degree $a$ in $\C[x_1, \ldots, x_k]$.
Moreover, part (b) includes the weight multiplicity formula for the nontrivial component of the adjoint representation of $\GL_k$, which corresponds to the special case $b=c=1$.

\begin{cor}[$\la$ or $\la^\pm$ has one row]
    \label{cor:one-row}
    Let $\bm{\delta} \in \widehat{M}$.

    \begin{enumerate}[label=\textup{(\alph*)}]
        \item Let $K = \O_k$, with $a \in \mathbb{N}$.
        We have 
        \[
        b^{(a)}_{\bm{\delta}} = \begin{cases}
            \displaystyle\binom{\frac{a - |\bm{\delta}|}{2} + k-2}{k-2}, & a - |\bm{\delta}| \in 2\mathbb{N},\\[2ex]
            0 & \textup{otherwise}.
        \end{cases}
        \]

        \item Let $K = \GL_k$, with $b,c \in \mathbb{N}$ and $a \coloneqq b+c$.
        We have 
        \[
        b^{(b, 0, \ldots, 0, -c)}_{\bm{\delta}} = \begin{cases}
            \displaystyle\binom{\frac{a- |\bm{\delta}|}{2} + k-2}{k-2}, & b-c = \sum_{i=1}^k \delta_i,\\[2ex]
            0 & \textup{otherwise}.
        \end{cases}
        \]

        \item Let $K = \Sp_{2k}$, with $a \in \mathbb{N}$.
        We have
        \[
        b^{(a)}_{\bm{\delta}} = \begin{cases}
            1, & a = |\bm{\delta}|,\\
            0 & \textup{otherwise}.
        \end{cases}
        \]
    \end{enumerate}
    
\end{cor}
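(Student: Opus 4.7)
My plan is to apply Theorem~\ref{thm:K to M} in each case: because $\la$ (or each of $\la^+$ and $\la^-$) is a single row, the defining conditions of a $K$-tableau collapse to very mild constraints on the smallest entries, and the enumeration reduces to counting nonnegative integer solutions of a single linear equation (stars and bars). In each case I would parametrize a $K$-tableau by the multiplicities of its entries, read off the $K$-tableau and weight conditions, and solve the resulting system.

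For part (a), a semistandard tableau of shape $(a)$ is a weakly increasing word $t_1 \leq \cdots \leq t_a$ on $\{1,\ldots,k\}$, and the first two columns contain only $t_1$ and $t_2$; hence the $\O_k$-condition collapses to the single requirement at $i=1$ that the entry $1$ occurs at most once. Writing $m_i$ for the multiplicity of $i$, the weight condition $\mathbf{wt}(T)=\bm{\delta}$ forces $m_i \equiv \delta_i \pmod 2$. Since $m_1 \in \{0,1\}$ and $\delta_1 \in \{0,1\}$, this pins down $m_1 = \delta_1$, while for $i \geq 2$ we may write $m_i = \delta_i + 2n_i$ with $n_i \in \mathbb{N}$. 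Substituting into $\sum_i m_i = a$ gives $\sum_{i=2}^{k} n_i = (a-|\bm{\delta}|)/2$, whose number of nonnegative integer solutions is the stated binomial coefficient, and is $0$ unless $a - |\bm{\delta}|$ is a nonnegative even integer.

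Part (c) is the simplest. Reading the single-row word right-to-left gives the entries in weakly decreasing order with respect to the alphabet $1 < \bar{1} < \cdots < k < \bar{k}$. Any occurrence of a barred entry $\bar{\imath}$ would therefore be encountered strictly before any occurrence of $i$, immediately violating the ballot condition. Hence an $\Sp_{2k}$-ballot tableau of shape $(a)$ uses only unbarred entries; its weight then coincides with its content, so such a tableau exists (and is uniquely determined by $\bm{\delta}$) if and only if $a = |\bm{\delta}|$.

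Part (b) requires the most care, and is where I expect the main bookkeeping. For a pair $(T^+, T^-)$ of shape $((b),(c))$, the $\GL_k$-condition again collapses to the single requirement at $i=1$ that the entry $1$ cannot appear in both $T^+$ and $T^-$. Writing $m_i^\pm$ for the multiplicities of $i$ in $T^\pm$, and $\delta_i^+ \coloneqq \max(\delta_i, 0)$, $\delta_i^- \coloneqq \max(-\delta_i, 0)$, this combined with $m_1^+ - m_1^- = \delta_1$ forces $m_1^+ = \delta_1^+$ and $m_1^- = \delta_1^-$. For $i \geq 2$ there is no $\GL_k$-constraint, so I would parametrize $m_i^+ = \delta_i^+ + n_i$ and $m_i^- = \delta_i^- + n_i$ with $n_i \in \mathbb{N}$. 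The two length equations $\sum_i m_i^+ = b$ and $\sum_i m_i^- = c$ then simultaneously encode the necessary balance condition $b - c = \sum_i \delta_i$ (as their difference, using $\delta_i^+ - \delta_i^- = \delta_i$) and the counting equation $\sum_{i=2}^{k} n_i = (a - |\bm{\delta}|)/2$ (as their average, using $\delta_i^+ + \delta_i^- = |\delta_i|$), again yielding the stated binomial coefficient.
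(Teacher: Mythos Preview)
Your proof is correct and follows essentially the same approach as the paper's. In each part you and the paper both reduce the $K$-tableau condition on a single row (or pair of rows) to a constraint on the multiplicity of the entry $1$, then parametrize the remaining freedom by the nonnegative integers $n_i = \lfloor (m_i - \delta_i)/2 \rfloor$ (for $\O_k$) or $n_i = \min(m_i^+, m_i^-)$ (for $\GL_k$) and count solutions of $\sum_{i=2}^k n_i = (a - |\bm{\delta}|)/2$ by stars and bars; the paper phrases this same count as a bijection with multisets of cardinality $(a-|\bm{\delta}|)/2$ on $\{2,\ldots,k\}$, but the content is identical.
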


\begin{proof}\

(a) By Theorem~\ref{thm:K to M}, $b^{(a)}_{\bm{\delta}} = \#\mathcal{T}(\O_k)^{(a)}_{\bm{\delta}}$, where $\mathcal{T}(\O_k)^{(a)}_{\bm{\delta}}$ is the set of $\O_k$-tableaux of shape $(a)$ such that $w_i \equiv \delta_i \: {\rm mod} \: 2$, where $w_i$ denotes the number of occurrences of the entry $i$.
Note that $a = \sum_{i=1}^k w_i$, so that $a - |\bm{\delta}|$ is a nonnegative even integer.
We construct a bijection between $\mathcal{T}(\O_k)^{(a)}_{\bm{\delta}}$ and the collection of all multisets of cardinality $(a - |\bm{\delta}|)/2$ that can be formed from the set $\{2, \ldots, k\}$.
In one direction, let $T \in \mathcal{T}(\O_k)^{(a)}_{\bm{\delta}}$.
By Definition~\ref{def:KC-tableaux}, we have $w_1 \leq 1$.
Since $T \in \mathcal{T}(\O_k)^{(a)}_{\bm{\delta}}$, $w_1 = 1$ if and only if $\delta_i = 1$; for $i \in \{2, \ldots, k\}$,  $w_i$ is even if $\delta_i = 0$ and $w_i$ is odd if $\delta_i = 1$.
(See~\eqref{T(KC)}.)
Associate to $T$ the multiset $\mathcal{A}_T \coloneqq \{2^{\ell_2}, 3^{\ell_3}, \ldots, k^{\ell_k}\}$, where $\ell_i = w_i / 2$ if $\delta_i = 0$ and $\ell_i = (w_i - 1)/2$ if $\delta_i = 1$.
Note that $\mathcal{A}_T$ is a multiset of cardinality $\sum_{i=2}^k \ell_i = (a - |\bm{\delta}|)/2$ consisting of elements taken from the set $\{2, \ldots, k\}$.
Conversely, given a multiset $\{2^{m_2}, 3^{m_3}, \ldots, k^{m_k}\}$ with each $m_i \geq 0$ and with $\sum_{i=2}^k m_i = (a - |\bm{\delta}|)/2$, we can construct a tableau $T \in \mathcal{T}(\O_k)^{(a)}_{\bm{\delta}}$ by filling boxes from left to right with the numbers $1, \ldots, k$ in weakly increasing order as follows.
If $\delta_1 = 1$, then start the tableau with the entry $1$, and if $\delta_1 = 0$, then do not start with a $1$.
Next, for each $i = 2, \ldots, k$, fill $2m_i + \delta_i$ many boxes with the entry $i$.
In this way, we produce a one-row $\O_k$-tableau $T$ with exactly $|\bm{\delta}| + 2\sum_{i=2}^k m_i = a$ many boxes, such that $\mathbf{wt}(T) \equiv \bm{\delta} \: {\rm mod} \: 2$.
We leave it to the reader to check that these maps are mutually inverse.
The result then follows from~\eqref{mset binomial}.

(b) By Theorem~\ref{thm:K to M}, we have $b^{(b,0,\ldots, 0, -c)}_{\bm{\delta}} = \#\mathcal{T}(\GL_k)^{((b),(c))}_{\bm{\delta}}$, where $\mathcal{T}(\GL_k)^{((b),(c))}_{\bm{\delta}}$ is the set of $\GL_k$-tableaux $(T^+, T^-)$ of shape $((b), (c))$ such that $w_i^+ - w_i^- = \delta_i$, where $w_i^\pm$ denotes the number of occurrences of the entry $i$ in $T^\pm$.
Note that $b-c = \sum_{i=1}^k \delta_i$.
We construct a bijection between $\mathcal{T}(\GL_k)^{((b),(c))}_{\bm{\delta}}$ and the collection of all multisets of cardinality $(b+c-|\bm{\delta}|)/2$ that can be formed from the set $\{2, \ldots, k\}$.
In one direction, let $T = (T^+, T^-) \in \mathcal{T}(\GL_k)^{((b),(c))}_{\bm{\delta}}$.
By Definition~\ref{def:KC-tableaux}, at least one of $w_1^+$ and $w_1^-$ equals $0$.
Further, since $T \in \mathcal{T}(\GL_k)^{((b),(c))}_{\bm{\delta}}$, we have $w_1^+ = \delta_1$ if $\delta_1 \geq 0$, and $w_1^- = -\delta_1$ if $\delta_1 \leq 0$.
(See~\eqref{T(KC)}.)
Associate to $T$ the multiset $\mathcal{A}_T \coloneqq \{2^{\ell_2}, 3^{\ell_3}, \ldots, k^{\ell_k}\}$, where $\ell_i = \min\{w^+_i, w^-_i\} = (w^+_i + w^-_i - |\delta_i|)/2$.
Note that $\mathcal{A}_T$ is a multiset of cardinality $\sum_{i=2}^k \ell_i = (b+c - |\bm{\delta}|)/2$ consisting of elements taken from the set $\{2, \ldots, k\}$.
Conversely, given a multiset $\{2^{m_2}, 3^{m_3}, \ldots, k^{m_k}\}$ with each $m_i \geq 0$ and with $\sum_{i=2}^k m_i = (b+c - |\bm{\delta}|)/2$, we can construct a pair $(T^+, T^-) \in \mathcal{T}(\GL_k)^{((b),(c))}_{\bm{\delta}}$ by filling boxes as follows.
Fill the first $|\delta_1|$ many boxes with the entry $1$ in the tableau $T^{{\rm sgn}(\delta_1)}$, where ${\rm sgn}(\delta_1)$ is the sign of $\delta_1$ (omit this step if $\delta_1 = 0$).
Next, for each $i = 2, \ldots, k$, fill $m_i$ many boxes with the entry $i$ in both $T^+$ and $T^-$; then fill $|\delta_i|$ many boxes with the entry $i$ in the tableau $T^{{\rm sgn}(\delta_i)}$ (or omit this step if $\delta_i = 0$).
In this way we produce a $\GL_k$-tableau $(T^+, T^-)$ of weight $\bm{\delta}$, where $T^+$ and $T^-$ have one row each, with a total of $|\bm{\delta}| + 2\sum_{i=2}^k m_i = b+c$ many boxes, such that $|T^+| - |T^-| = \sum_{i=1}^k \delta_i = b-c$.
It follows that $(T^+, T^-)$ has shape $((b),(c))$.
We leave it to the reader to check that these maps are mutually inverse.
The result then follows from~\eqref{mset binomial}.

(c) By Theorem~\ref{thm:K to M}, the multiplicity $b^{(a)}_{\bm{\delta}}$ is the number of $\Sp_{2k}$-ballot tableaux of shape $(a)$, in which each $\delta_i$ equals the difference between the number of entries $i$ and $\bar{\imath}$ (in that order).
But because we consider only $\Sp_{2k}$-\emph{ballot} tableaux (see Definition~\ref{def:KC-tableaux}), and because each $\delta_i \geq 0$ (by~\eqref{M hat}), a one-row tableau cannot contain any barred entries $\bar{\imath}$.
Therefore the weight $\bm{\delta}$ determines a unique valid filling of a one-row $\Sp_{2k}$-ballot tableau if and only if $|\bm{\delta}|$ equals the number of boxes in the tableau (and otherwise no filling is possible). \qedhere
\end{proof}

In the proof of parts (b) and (c) of the following corollary, we make use of \emph{ballot sequences}, which we define as follows.
Let $S$ be a finite set $\{s_1, \ldots, s_\ell\}$ of positive integers such that $s_1 < \cdots < s_\ell$, and let $x$ and $y$ be nonnegative integers such that $x + y = \ell$.
An \emph{$(x,y)$-ballot sequence over $S$} is a sequence $(s_1, \ldots, s_\ell)$ where exactly $y$ many terms are \emph{marked} with a star, such that as the sequence is read from left to right, the number of marked terms never exceeds the number of unmarked terms.
For example, if $S = \{2,4,5,7,9\}$, then $(2, 4^*, 5, 7, 9^*)$ is a $(3,2)$-ballot sequence over $S$.
Note that $(2, 4^*, 5^*, 7, 9)$ is not a ballot sequence, because within the first three terms, two are marked.
We observe that the number of $(x,y)$-ballot sequences over $S$ equals the number of Dyck paths from $(0,0)$ to $(x,y)$, which is well known~\cite{Carlitz}*{equation (2.12)} to be given by the \emph{ballot number}
\begin{equation}
    \label{lattice path formula}
    \frac{x-y+1}{x+y+1}\binom{x+y+1}{y}_{\textstyle{.}}
\end{equation}

\begin{cor}[$\la$ or $\la^\pm$ has one column]
\label{cor:one-column}
    Let $\bm{\delta} \in \widehat{M}$.

    \begin{enumerate}[label=\textup{(\alph*)}]
        \item If $K = \O_k$, with $0 \leq a \leq k$, then we have
        \[
        b^{(1^a)}_{\bm{\delta}} = \begin{cases}
            1, & a = |\bm{\delta}|,\\
            0 & \textup{otherwise}.
        \end{cases}
        \]

        \item If $K = \GL_k$, with $b,c \in \mathbb{N}$ such that $a \coloneq b+c \leq k$, then we have
        \[
        b^{(1^b, 0, \ldots, 0, -1^c)}_{\bm{\delta}} = 
        \begin{cases}
            \displaystyle\frac{k-a+1}{k-|\bm{\delta}|+ 1} \binom{k-|\bm{\delta}| + 1}{\frac{a-|\bm{\delta}|}{2}}, & \bm{\delta} \in \{-1,0,1\}^k \textup{ and } b-c = \sum_{i=1}^k \delta_i,\\[2ex]
            0 & \textup{otherwise}.
            \end{cases}
            \]
            
        \item If $K = \Sp_{2k}$, with $0 \leq a \leq k$, then we have
        \[
        b^{(1^a)}_{\bm{\delta}} = 
        \begin{cases}
            \displaystyle\frac{k-a+1}{k-|\bm{\delta}|+ 1} \binom{k-|\bm{\delta}| + 1}{\frac{a-|\bm{\delta}|}{2}}, & \bm{\delta} \in \{0,1\}^k \textup{ and } a - |\bm{\delta}| \in 2\mathbb{N},\\[2ex]
            0 & \textup{otherwise}.
            \end{cases}
            \]
        
    \end{enumerate}

\end{cor}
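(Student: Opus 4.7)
The plan is to apply Theorem~\ref{thm:K to M} in each case to reduce the multiplicity to a count of single-column $K$-tableaux, and then exploit the fact that the strict increase of entries along a column severely constrains the possible fillings. In all three parts, the key insight is that the weight $\bm{\delta}$ almost determines the tableau, with the only remaining freedom occurring at the positions $i$ where $\delta_i = 0$. In parts (b) and (c), I expect this residual freedom to be counted by a ballot condition on those positions, so that the ballot number~\eqref{lattice path formula} immediately yields the stated closed form.

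\textbf{Part (a).} A column $\O_k$-tableau of shape $(1^a)$ is just a strictly increasing sequence of $a$ entries from $\{1,\ldots,k\}$, hence a subset of $\{1,\ldots,k\}$ of size $a$. Since each entry $i$ appears at most once, its multiplicity modulo $2$ equals the indicator that it appears, so $w_i = 1$ iff $i$ is in the subset. Thus $\bm{\delta} \in \{0,1\}^k$ uniquely determines the subset $\{i : \delta_i = 1\}$, which has size $|\bm{\delta}|$; so a valid tableau exists (and is then unique) iff $a = |\bm{\delta}|$.

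\textbf{Part (b).} Each pair $(T^+, T^-) \in \mathcal{T}(\GL_k)^{((1^b),(1^c))}_{\bm{\delta}}$ consists of two strictly increasing columns, hence two subsets of $\{1,\ldots,k\}$. The weight $\delta_i = [i \in T^+] - [i \in T^-]$ forces $\delta_i \in \{-1,0,1\}$, and whenever $\delta_i \neq 0$ the sign prescribes exactly which column contains $i$. The remaining freedom lies at the $N \coloneqq k - |\bm{\delta}|$ positions where $\delta_i = 0$, each of which is either ``both'' (i.e.\ $i \in T^+ \cap T^-$) or ``neither.'' Since the total box count $b+c = a$ equals $|\bm{\delta}| + 2 \cdot \#\textup{both}$, exactly $y \coloneqq (a - |\bm{\delta}|)/2$ of these zero positions must be ``both.'' The $\GL_k$-condition in Definition~\ref{def:KC-tableaux}(b) states that for every $i$ the combined number of entries $\leq i$ across $T^+$ and $T^-$ is at most $i$; nonzero positions $j \leq i$ each contribute exactly $1$ to that combined count, while zero positions contribute $0$ or $2$, so this inequality is equivalent to the ballot condition ``at every prefix of the zero positions (in increasing order of $j$), the count of \emph{both} does not exceed the count of \emph{neither}.'' Applying formula~\eqref{lattice path formula} with $y$ marked and $x = N - y$ unmarked entries produces
\[
\frac{x - y + 1}{x + y + 1}\binom{x+y+1}{y} = \frac{k - a + 1}{k - |\bm{\delta}| + 1}\binom{k - |\bm{\delta}| + 1}{(a - |\bm{\delta}|)/2}_{\textstyle{,}}
\]
as claimed.

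\textbf{Part (c).} The analysis parallels part (b). In a single-column $\Sp_{2k}$-ballot tableau, strict increase in the alphabet $1 < \bar{1} < 2 < \bar{2} < \cdots$ plus the ballot condition (which for a column means reading top-to-bottom, $\bar{\imath}$ may occur only after $i$) together force $\delta_i = \#\{i\} - \#\{\bar{\imath}\} \in \{0,1\}$. Here $\delta_i = 1$ means $i$ appears alone, and $\delta_i = 0$ means either both $i, \bar{\imath}$ appear or neither does. Exactly as before, the number of ``both'' positions among the $N = k - |\bm{\delta}|$ zero indices must be $y = (a - |\bm{\delta}|)/2$. The $\Sp_{2k}$-condition in Definition~\ref{def:KC-tableaux}(c), which bounds the number of entries $\leq \bar{\imath}$ by $i$, translates verbatim into the same ballot condition as in (b), and~\eqref{lattice path formula} gives the same closed form.

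\textbf{Main obstacle.} The delicate step in both (b) and (c) is verifying that the $K$-tableau inequality is precisely equivalent to the ballot condition on the zero positions. One must check that the nonzero-$\delta_i$ positions contribute exactly the ``neutral'' value to each partial sum, isolating the variable contribution to the $\delta_i = 0$ positions; once this bookkeeping is in place, the enumeration reduces directly to counting Dyck-type paths via~\eqref{lattice path formula}.
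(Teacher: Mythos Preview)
Your proposal is correct and follows essentially the same approach as the paper: reduce via Theorem~\ref{thm:K to M} to counting single-column $K$-tableaux, observe that strict increase forces each (unbarred/barred or $\pm$) entry to appear at most once so that $\bm{\delta}$ determines the nonzero positions, and then biject the remaining freedom at the $\delta_i=0$ positions with ballot sequences over $Z(\bm{\delta})=\{i:\delta_i=0\}$ counted by~\eqref{lattice path formula}. The only point you leave implicit that the paper states explicitly is the necessary condition $b-c=\sum_i\delta_i$ in part~(b), which follows immediately from your bookkeeping since $|T^+|-|T^-|=\#\{i:\delta_i=1\}-\#\{i:\delta_i=-1\}$.
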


\begin{proof}\

(a) By Theorem~\ref{thm:K to M}, $b^{(1^a)}_{\bm{\delta}} = \#\mathcal{T}(\O_k)^{(1^a)}_{\bm{\delta}}$, where $\mathcal{T}(\O_k)^{(1^a)}_{\bm{\delta}}$ is the set of $\O_k$-tableaux of shape $(1^a)$ such that each entry $i$ appears an even (resp., odd) number of times if $\delta_i = 0$ (resp., 1).
Since the shape $(1^a)$ is a single column, each entry appears at most once.
Therefore, the weight $\bm{\delta}$ determines a unique filling if and only if $a = |\bm{\delta}|$, and otherwise no filling is possible.

(b) By Theorem~\ref{thm:K to M}, $b^{(1^a)}_{\bm{\delta}} = \#\mathcal{T}(\GL_k)^{(1^b, 0, \ldots, 0, -1^c)}_{\bm{\delta}}$, where $\mathcal{T}(\GL_k)^{(1^b, 0, \ldots, 0, -1^c)}_{\bm{\delta}}$ is the set of $\GL_{k}$-tableaux $(T^+, T^-)$ of shape $((1^b), (1^c))$ such that the number of $i$'s in $T^+$ minus the number of $i$'s in $T^-$ equals $\delta_i$.
Since each of $T^+$ and $T^-$ is semistandard with a single column, no entry occurs more than once in each.
Therefore,  for $\mathcal{T}(\GL_k)^{(1^b, 0, \ldots, 0, -1^c)}_{\bm{\delta}} \neq \varnothing$ we require that $\bm{\delta} \in \{-1,0,1\}^k$.
It follows that $b-c = \sum_{i=1}^k \delta_i$.
Let
\begin{equation}
    \label{Z}
    Z(\bm{\delta}) \coloneqq \{i: \delta_i = 0\},
\end{equation}
so that
\begin{equation}
    \label{size Z}
    \# Z(\bm{\delta}) = k - |\bm{\delta}|.
\end{equation}
We construct a bijection between $\mathcal{T}(\GL_k)^{(1^b, 0, \ldots, 0, -1^c)}_{\bm{\delta}}$ and the collection of $(\frac{2k- a -|\bm{\delta}|}{2}, \: \frac{a-|\bm{\delta}|}{2})$-ballot sequences over $Z(\bm{\delta})$, where ${a = b+c}$.

In one direction, let $T = (T^+, T^-) \in \mathcal{T}(\GL_k)^{(1^b, 0, \ldots, 0, -1^c)}_{\bm{\delta}}$.
Associate to $T$ the increasing sequence $B_T$ consisting of the elements of $Z(\bm{\delta})$, where each term $i \in Z(\bm{\delta})$ is marked with a star if and only if the entry $i$ occurs in both $T^+$ and $T^-$.
We claim that $B_T$ is a $(\frac{2k-a-|\bm{\delta}|}{2}, \frac{a-|\bm{\delta}|}{2})$-ballot sequence.
We first show that $B_T$ has the specified number of marked and unmarked terms.
Indeed, if $i \notin Z(\bm{\delta})$, then it follows from definitions that the entry $i$ occurs in $T^+$ or $T^-$ but not in both.
The number of such indices $i$ is clearly $|\bm{\delta}|$.
Since $|T^+| + |T^-| = b+c = a$, it follows that the number of marked terms in $B_T$ is $\frac{a-|\bm{\delta}|}{2}$.
By~\eqref{size Z}, the number of unmarked terms in $B_T$ is therefore $\# Z(\bm{\delta}) - \frac{a - |\bm{\delta}|}{2} = k - |\bm{\delta}| - \frac{a - |\bm{\delta}|}{2} = \frac{2k-a-|\bm{\delta}|}{2}$.
It remains to show that $B_T$ is indeed a ballot sequence.
Let $(B_T)_{\leq i}$ denote the set of terms in $B_T$ which are at most $i$, and let $(B_T)^*_{\leq i}$ denote the set of marked terms in $B_T$ which are at most $i$.
Note that
\begin{equation}
    \label{i equation}
    \#\{\text{pos. integers $\leq i$ not in $Z(\bm{\delta})$} \} + \#(B_T)_{\leq i} = i.
\end{equation}
By construction, since $T$ is a $\GL_{k}$-tableau,
\begin{align}
    \#\{\text{boxes in $T^+$ or $T^-$ with entry $\leq i$}\} & = \#\{\text{pos. integers $\leq i$ not in $Z(\bm{\delta})$} \} + 2 \cdot \#(B_T)^*_{\leq i} \nonumber \\
    & \leq i. \label{i inequality}
\end{align}
It follows from~\eqref{i equation} that $2 \cdot \#(B_T)^*_{\leq i} \leq \#(B_T)_{\leq i}$ for all $1 \leq i \leq k$, and therefore $B_T$ is a ballot sequence.

Conversely, given a $(\frac{2k - a -|\bm{\delta}|}{2}, \: \frac{a-|\bm{\delta}|}{2})$-ballot sequence $B$ over $Z(\bm{\delta})$, we can construct a tableau in $\mathcal{T}(\GL_k)^{(1^b, 0, \ldots, 0, -1^c)}_{\bm{\delta}}$ as follows.
Starting with $i=1$ and proceeding to $i=k$, construct a tableau $T = (T^+, T^-)$ from the Young diagrams for $(1^b)$ and $(1^c)$, respectively, according to the following rules.
If $\delta_i = +1$, then place the entry $i$ in the next empty box of $T^+$.
If $\delta_i = -1$, then place the entry $i$ in the next empty box of $T^-$.
If $i^*$ is a marked term in $B$, then place $i$ in the next empty box of both $T^+$ and $T^-$.
If $i$ satisfies none of these conditions, then proceed to $i+1$.
Since $B$ is a ballot sequence, $2 \cdot \#(B)^*_{\leq i} \leq (B)_{\leq i}$ for all $1 \leq i \leq k$, and it follows from definitions that~\eqref{i equation} holds when $B_T$ is replaced by $B$.
Replacing $B_T$ with $B$ in~\eqref{i inequality}, we then see that $T$ is a $\GL_k$-tableau.
The fact that $\mathbf{wt}(T) = \bm{\delta}$ is clear from the construction of $T$.
We leave it to the reader to check that these maps are mutually inverse.
The result then follows from~\eqref{lattice path formula}.

(c) By Theorem~\ref{thm:K to M}, $b^{(1^a)}_{\bm{\delta}} = \#\mathcal{T}(\Sp_{2k})^{(1^a)}_{\bm{\delta}}$, where $\mathcal{T}(\Sp_{2k})^{(1^a)}_{\bm{\delta}}$ is the set of $\Sp_{2k}$-ballot tableaux of shape $(1^a)$ such that the number of $i$'s minus the number of $\bar{\imath}$'s equals $\delta_i$.
Since any such tableau is semistandard with a single column, no entry occurs more than once.
Therefore,  for $\mathcal{T}(\Sp_{2k})^{(1^a)}_{\bm{\delta}} \neq \varnothing$ we require that $\bm{\delta} \in \{0,1\}^k$.
Let $Z(\bm{\delta})$ be as defined in~\eqref{Z}.
We construct a bijection between $\mathcal{T}(\Sp_{2k})^{(1^a)}_{\bm{\delta}}$ and the collection of $(\frac{2k- a -|\bm{\delta}|}{2}, \: \frac{a-|\bm{\delta}|}{2})$-ballot sequences over $Z(\bm{\delta})$.

The proof proceeds analogously to the proof of part (b).
In this case, $i^*$ is marked in $B_T$ if and only if both $i$ and $\bar{\imath}$ occur in $T$.
Since $T$ is an $\Sp_{2k}$-tableau, the analogue of~\eqref{i inequality} in this case yields that $B_T$ is a ballot sequence.
Conversely, starting with a ballot sequence $B$, we construct a tableau $T$ as in the proof of part (b);
we place an $i$ in $T$ for each $i \notin Z(\bm{\delta})$, and we place an $i$ and $\bar{\imath}$ (in that order) for each term $i^*$ marked in $B$.
Since $B$ is a ballot sequence, it follows that $T$ is an $\Sp_{2k}$-tableau such that $\mathbf{wt}(T) = \bm{\delta}$, using the analogue of~\eqref{i inequality} again.
\end{proof}

\bibliographystyle{amsplain}
\bibliography{references}

\end{document}